\DeclareMathAlphabet{\mathpzc}{OT1}{pzc}{m}{it}
\begin{document}

\newtheorem{theorem}[subsection]{Theorem}
\newtheorem{proposition}[subsection]{Proposition}
\newtheorem{lemma}[subsection]{Lemma}
\newtheorem{corollary}[subsection]{Corollary}
\newtheorem{conjecture}[subsection]{Conjecture}
\newtheorem{prop}[subsection]{Proposition}
\numberwithin{equation}{section}
\newcommand{\mr}{\ensuremath{\mathbb R}}
\newcommand{\mc}{\ensuremath{\mathbb C}}
\newcommand{\dif}{\mathrm{d}}
\newcommand{\intz}{\mathbb{Z}}
\newcommand{\ratq}{\mathbb{Q}}
\newcommand{\natn}{\mathbb{N}}
\newcommand{\comc}{\mathbb{C}}
\newcommand{\rear}{\mathbb{R}}
\newcommand{\prip}{\mathbb{P}}
\newcommand{\uph}{\mathbb{H}}
\newcommand{\fief}{\mathbb{F}}
\newcommand{\majorarc}{\mathfrak{M}}
\newcommand{\minorarc}{\mathfrak{m}}
\newcommand{\sings}{\mathfrak{S}}
\newcommand{\fA}{\ensuremath{\mathfrak A}}
\newcommand{\mn}{\ensuremath{\mathbb N}}
\newcommand{\mq}{\ensuremath{\mathbb Q}}
\newcommand{\half}{\tfrac{1}{2}}
\newcommand{\f}{f\times \chi}
\newcommand{\summ}{\mathop{{\sum}^{\star}}}
\newcommand{\chiq}{\chi \bmod q}
\newcommand{\chidb}{\chi \bmod db}
\newcommand{\chid}{\chi \bmod d}
\newcommand{\sym}{\text{sym}^2}
\newcommand{\hhalf}{\tfrac{1}{2}}
\newcommand{\sumstar}{\sideset{}{^*}\sum}
\newcommand{\sumprime}{\sideset{}{'}\sum}
\newcommand{\sumprimeprime}{\sideset{}{''}\sum}
\newcommand{\sumflat}{\sideset{}{^{\flat}}\sum}
\newcommand{\sumSTAR}{\sideset{}{^{\star}}\sum}
\newcommand{\shortmod}{\ensuremath{\negthickspace \negthickspace \negthickspace \pmod}}
\newcommand{\V}{V\left(\frac{nm}{q^2}\right)}
\newcommand{\sumi}{\mathop{{\sum}^{\dagger}}}
\newcommand{\mz}{\ensuremath{\mathbb Z}}
\newcommand{\leg}[2]{\left(\frac{#1}{#2}\right)}
\newcommand{\muK}{\mu_{\omega}}

\newcommand{\RR}{\mathbb{R}}
\newcommand{\QQ}{\mathbb{Q}}
\newcommand{\CC}{\mathbb{C}}
\newcommand{\NN}{\mathbb{N}}
\newcommand{\ZZ}{\mathbb{Z}}
\newcommand{\FF}{\mathbb{F}}
\newcommand{\C}{{\mathcal{C}}}
\newcommand{\OO}{{\mathcal{O}}}
\newcommand{\cc}{{\mathfrak{c}}}
\newcommand{\norm}{{\mathpzc{N}}}
\newcommand{\trace}{{\mathrm{Tr}}}
\newcommand{\ringO}{{\mathfrak{O}}}
\newcommand{\fa}{{\mathfrak{a}}}
\newcommand{\fb}{{\mathfrak{b}}}
\newcommand{\fc}{{\mathfrak{c}}}
\newcommand{\res}{{\mathrm{res}}}
\newcommand{\fp}{{\mathfrak{p}}}
\newcommand{\fm}{{\mathfrak{m}}}
\newcommand{\aut}{\rm Aut}
\newcommand{\mt}{m(t,u;\ell^k)}
\newcommand{\mtone}{m(t_1,u;\ell^k)}
\newcommand{\mttwo}{m(t_2,u;\ell^k)}
\newcommand{\mbadu}{m(t,u_0;\ell^k)}
\newcommand{\Sts}{S(t_1,t_2;\ell^k)}
\newcommand{\Stt}{S(t;\ell^k)}
\newcommand{\St}{R(t;\ell^k)}
\newcommand{\nN}{n(N,u;\ell^k)}
\newcommand{\Tnn}{T(N;\ell^k)}
\newcommand{\Tn}{T(N;\ell^k)}
\newcommand{\tilnul}{{\tilde{\nu}}_\ell}
\newcommand{\tilnulk}{{\tilde{\nu}}_\ell^{(k)}}
\makeatletter
\def\imod#1{\allowbreak\mkern7mu({\operator@font mod}\,\,#1)}
\makeatother

\title[The Large Sieve with Power Moduli in Imaginary Quadratic Number Fields]{The Large Sieve with Power Moduli in Imaginary Quadratic Number Fields}

\date{\today}
\author{Peng Gao and Liangyi Zhao}

\begin{abstract}
   We establish large sieve inequalities for power moduli in imaginary quadratic number fields, extending earlier work of Baier and Bansal \cites{Baier&Bansal, Baier&Bansa2} for the Gaussian field.
\end{abstract}

\maketitle

\noindent {\bf Mathematics Subject Classification (2010)}: 11N35, 11L40 \newline

\noindent {\bf Keywords}: Large sieve, number fields, power moduli, prime moduli

\section{Introduction}
The classical large sieve inequality, a very useful tool with a wide range of applications in analytic number theory, originated from J. V. Linnik's study \cite{JVL1} on the distribution of quadratic non-residues.  There have been many subsequent refinements and extensions on the large sieve. One direction of investigating the large sieve is to establish such results for sparse sets of moduli. For prime moduli, this was obtained by D. Wolke in \cite{Wol}.  In recent years, the large sieve for moduli that are values of polynomials with degree at least two was studied in a series of papers \cites{Ba1, B&Z1, B&Z, Halupczok, Zha, ShpZha, BaiLynZha}).  \newline

    In \cite{Zha}, the second-named author conjectured that the following large sieve inequality holds for $k$-th power moduli ($k\in \mathbb{N}$ arbitrary but fixed):
\begin{equation*}
\sum\limits_{q\le Q} \sum\limits_{\substack{a=1\\ (a,q)=1}}^{q^k} \left| \sum\limits_{M<n\le M+N} a_n e\left( \frac{an}{q^k}\right) \right|^2
\ll_{\varepsilon} Q^{\varepsilon}\left(Q^{k+1}+N\right)\sum\limits_{M<n\le M+N} |a_n|^2.
\end{equation*}
   Here $Q, N \in \mn, M \in \mz$, $\varepsilon$ is any positive constant, and $\{a_n \}$ is any arbitrary sequence of complex numbers.  Additionally, it is proved in the same paper \cite{Zha} that
\begin{equation} \label{kls}
\begin{split}
& \sum\limits_{q\le Q} \sum\limits_{\substack{a=1\\ (a,q)=1}}^{q^k} \left| \sum\limits_{M<n\le M+N} a_n e\left(\frac{an}{q^k}\right)
\right|^2 \ll_{\varepsilon}  (QN)^{\varepsilon}\left(Q^{k+1}+NQ^{1-1/\kappa}+N^{1-1/\kappa}Q^{1+k/\kappa}
\right) \sum\limits_{M<n\le M+N} |a_n|^2,
\end{split}
\end{equation}
where $\kappa=2^{k-1}$. Improvements of this result have been established in \cites{B&Z1, B&Z, Halupczok}. \newline

An analogue result of \eqref{kls} was established for the Gaussian field by S. Baier and A. Bansal \cite{Baier&Bansal}, showing that
\begin{equation*}
\begin{split}
& \sum\limits_{\substack{q\in \mathbb{Z}[i]\setminus\{0\}\\ \norm(q)\le Q}}
\sum\limits_{\substack{r \bmod{q^k}\\ (r,q)=1}} \left|\sum\limits_{\substack{n\in \mathbb{Z}[i]\\ \norm(n)\le N}}
a_n \cdot e\left(\Re\left(\frac{nr}{q^k}\right)\right)\right|^2 \ll
(QN)^{\varepsilon}\left(Q^{k+1}+NQ^{1-1/\kappa}+N^{1-1/\kappa}Q^{1+k/\kappa}
\right)
\sum\limits_{\substack{n\in \mathbb{Z}[i]\\ \norm(n)\le N}} |a_n|^2,
\end{split}
\end{equation*}
  where $\Re(z)$ denotes the real part of $z$ for any $z \in \mc$ and $\norm(n)$ the norm the element $n$ in the number field.  A further improvement of the above result for square moduli was given recently in \cite{Baier&Bansa2}. \newline

  Motivated by the above results, we are interested in large sieve results for power moduli in imaginary quadratic number fields.
 Throughout the paper, we let $K$ be such a field and write $\mathcal{O}_K$ for the ring of integers in $K$.  It is well-known that we have $K=\mq(\sqrt{d})$ with $d$ a negative, square-free rational integer.  Then (see \cite[Section 3.8]{iwakow}) the discriminant $D_K$ of $K$ is
\begin{align*}
   D_K & =\begin{cases}
     d \qquad & \text{if $d \equiv 1 \pmod 4$ }, \\
     4d \qquad & \text{if $d \equiv 2, 3 \pmod 4$ }.
    \end{cases}
\end{align*}

Let $\norm(q)$ and ${\rm Tr}(q)$ denote the norm and the trace, respectively, of $q\in \mathcal{O}_K$. For any complex number $z$,  we define
\begin{align*}
   \widetilde{e}_K(z) =\exp \left( \mbox{\rm Tr} \left( \frac {z}{\sqrt{D_K}} \right) \right)=\exp \left( 2\pi i  \left( \frac {z}{\sqrt{D_K}} - \frac {\overline{z}}{\sqrt{D_K}} \right) \right).
\end{align*}

   To obtain a large sieve result for any number field, our first observation is that it is more proper to use the additive character $\widetilde{e}_K(z)$ instead of $e(\Re(z))$ in the general case, as $\widetilde{e}_K(z)$ naturally appears in the arithmetic of number fields. For example, in the definition of the Gauss sum associated to Hecke characters (see \cite[(3.86)]{iwakow}). Next, we note that a generalization of the large sieve for number fields was established by M. N. Huxley \cite{Huxley10}. In the case of imaginary quadratic number fields $K$, it takes the form
\begin{equation} \label{Huxley}
\sum\limits_{\substack{q\in \mathcal{O}_K\setminus\{0\}\\ \norm(q)\le Q}}
\sum\limits_{\substack{r \bmod{q}\\ (r,q)=1}} \left|\sum\limits_{\substack{n\in \mathcal{O}_K\\ \norm(n)\le N}}
a_n \cdot \widetilde{e}_K\left(\frac{nr}{q}\right)\right|^2 \ll \left(Q^2+N\right)\sum\limits_{\substack{n\in \mathcal{O}_K\\
\norm(n)\le N}} |a_n|^2.
\end{equation}

  In this paper, we first extend the above mentioned result of Baier and Bansal on large sieve for power moduli in the Gaussian field to all imaginary quadratic number fields. Our result is
\begin{theorem}
\label{mainthm}
   Let $K$ be any imaginary quadratic number field. Let $k\in \mathbb{N}, \kappa=2^{k-1}$, $Q,N\ge 1$ and $(a_n)_{n\in \mathcal{O}_K}$ be any sequence of complex numbers. Then
\begin{equation*}
\begin{split}
& \sum\limits_{\substack{q\in \mathcal{O}_K\setminus\{0\}\\ \norm(q)\le Q}}
\sum\limits_{\substack{r \bmod{q^k}\\ (r,q)=1}} \left|\sum\limits_{\substack{n\in \mathcal{O}_K\\ \norm(n)\le N}}
a_n \cdot \widetilde{e}_K\left(\frac{nr}{q^k}\right)\right|^2 \ll
(QN)^{\varepsilon}\left(Q^{k+1}+NQ^{1-1/\kappa}+N^{1-1/\kappa}Q^{1+k/\kappa}
\right)
\sum\limits_{\substack{n\in \mathcal{O}_K\\ \norm(n)\le N}} |a_n|^2,
\end{split}
\end{equation*}
where $\varepsilon$ is any positive constant, and the implied $\ll$-constant depends on $k$ and $\varepsilon$.
\end{theorem}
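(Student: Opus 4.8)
The plan is to follow the strategy of Baier and Bansal \cite{Baier&Bansal} for the Gaussian field (itself a refinement of Zhao's treatment of \eqref{kls}), paying attention to the new features of a general imaginary quadratic field. I would begin with the duality principle for bilinear forms: the asserted inequality is equivalent to
\begin{equation*}
\sum_{\substack{n\in\mathcal{O}_K\\ \norm(n)\le N}}\Bigl|\sum_{\substack{q\in\mathcal{O}_K\setminus\{0\}\\ \norm(q)\le Q}}\ \sum_{\substack{r\bmod q^{k}\\ (r,q)=1}} b_{q,r}\,\widetilde{e}_K\Bigl(\frac{nr}{q^{k}}\Bigr)\Bigr|^{2}\ \ll\ (QN)^{\varepsilon}\Bigl(Q^{k+1}+NQ^{1-1/\kappa}+N^{1-1/\kappa}Q^{1+k/\kappa}\Bigr)\sum_{q,r}|b_{q,r}|^{2}
\end{equation*}
for arbitrary complex numbers $(b_{q,r})$. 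Expanding the square, applying $|b_{q_1,r_1}b_{q_2,r_2}|\le\tfrac12(|b_{q_1,r_1}|^{2}+|b_{q_2,r_2}|^{2})$ and exploiting the symmetry between the two pairs, the problem reduces to bounding, for each fixed $(q_1,r_1)$, the sum $\sum_{\norm(q_2)\le Q}\sum_{(r_2,q_2)=1}\bigl|\sum_{\norm(n)\le N}\widetilde{e}_K(n\beta)\bigr|$ with $\beta:=r_1/q_1^{k}-r_2/q_2^{k}\in K$.

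Next I would bound the inner sum over $n$, an incomplete additive character sum over the $\asymp N$ points of the lattice $\mathcal{O}_K$ in a disc of area $\asymp N$ in $\mathbb{C}$, by a two-dimensional analogue of the bound $\min(N,\|x\|^{-1})$, namely $\bigl|\sum_{\norm(n)\le N}\widetilde{e}_K(n\beta)\bigr|\ll\min\bigl(N,\|\beta\|_K^{-2}\bigr)$, where $\|\beta\|_K$ denotes the distance from $\beta$ to the lattice $\mathcal{O}_K$ (recall $\widetilde{e}_K$ is trivial on $\mathcal{O}_K$); thus the sum is small unless $r_1q_2^{k}-r_2q_1^{k}$ lies close, in $\mathcal{O}_K$, to a multiple of $(q_1q_2)^{k}$. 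Splitting dyadically according to the size of $\|\beta\|_K$, everything comes down to estimating, uniformly in $\alpha\in\mathbb{C}$ and $\Delta\ge 1$,
\begin{equation*}
\mathcal{A}(\alpha;\Delta):=\#\Bigl\{(q,r):\ \norm(q)\le Q,\ r\bmod q^{k},\ (r,q)=1,\ \bigl\|\alpha-\tfrac{r}{q^{k}}\bigr\|_K\le\tfrac{1}{\Delta}\Bigr\},
\end{equation*}
the goal being a bound of roughly the shape $\mathcal{A}(\alpha;\Delta)\ll(Q\Delta)^{\varepsilon}\bigl(\Delta^{-2}Q^{k+1}+Q^{1-1/\kappa}+Q^{1+k/\kappa}\Delta^{-2/\kappa}\bigr)$; inserting this into $\sum_{\Delta}\min(N,\Delta^{2})\,\mathcal{A}(\alpha;\Delta)$ (the $\Delta$ running over dyadic scales up to $\ll Q^{k}$) reproduces, term by term, the three terms $Q^{k+1}$, $NQ^{1-1/\kappa}$, $N^{1-1/\kappa}Q^{1+k/\kappa}$ of the theorem.

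The crux is the estimation of $\mathcal{A}(\alpha;\Delta)$. Fixing $q$, counting the admissible $r$ inside a disc of radius $\asymp\norm(q)^{k/2}/\Delta$ reduced modulo $q^{k}$, and summing over $\norm(q)\le Q$ gives only the trivial bound $\mathcal{A}(\alpha;\Delta)\ll Q+\Delta^{-2}Q^{k+1}$, which is wasteful in the first term. To improve it one compares two near-solutions $(q_1,r_1)$ and $(q_2,r_2)$: their closeness forces $r_1q_2^{k}-r_2q_1^{k}$ to lie within $\asymp\Delta^{-1}|q_1q_2|^{k}$ of a multiple of $(q_1q_2)^{k}$; extracting the greatest common divisor of $q_1$ and $q_2$ and sorting the variables into residue classes leaves a count of the same type but now governed by $(k-1)$-st power moduli, so the theorem for $k-1$ feeds an induction that terminates, at $k=1$, with Huxley's inequality \eqref{Huxley}. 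Each descent step squares the quantity that controls the gain, which is the origin of $\kappa=2^{k-1}$. I expect this to be the main obstacle, and it is where a general imaginary quadratic field genuinely departs from the Gaussian one: ``removing the greatest common divisor'' is no longer innocuous once $\mathcal{O}_K$ fails to be a principal ideal domain, so the decomposition must be organized around greatest common divisor \emph{ideals} --- their bounded failure to be principal, together with the finitely many units of $\mathcal{O}_K$, costing only admissible powers of $Q\Delta$ --- and, since the lattice $\mathcal{O}_K$ and the character $\widetilde{e}_K$ both depend on the discriminant $D_K$, all the lattice-point counts and the geometric bound above must be carried out with enough uniformity in $D_K$; in particular the degenerate range $N\ll|D_K|$, where $\{n\in\mathcal{O}_K:\norm(n)\le N\}$ collapses to rational integers, is treated separately and easily.

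Finally I would substitute the bound for $\mathcal{A}$ back, sum over the $O(\log QN)$ relevant dyadic scales of $\|\beta\|_K$ (the logarithms absorbed into $(QN)^{\varepsilon}$), sum against $|b_{q_1,r_1}|^{2}$ over the fixed pair, and optimize; this yields the dual estimate above, and the duality principle then returns the theorem.
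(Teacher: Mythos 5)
Your reduction of the theorem to a Farey--fraction counting problem is in the same spirit as the paper (the paper uses the counting form of the two--dimensional large sieve, Lemma 2.5, rather than duality, and only needs the single scale $\Delta\asymp\sqrt N$, but the shape of your target bound for $\mathcal{A}(\alpha;\Delta)$ matches the bound the paper proves for its quantity $E$). The genuine gap is in the crux, the proof of that counting bound. Your proposed mechanism --- compare two near--solutions, note that $r_1q_2^k-r_2q_1^k$ is small modulo $(q_1q_2)^k$, extract the greatest common divisor, and claim the residual count is ``of the same type but governed by $(k-1)$-st power moduli'' so that the theorem for $k-1$ feeds an induction terminating in Huxley's inequality --- is not substantiated and does not work as described: differencing $k$-th powers produces a polynomial of degree $k-1$ with extra variable coefficients (as in $P_{k-1,\alpha_1}(q)$ in the paper), not a pure $(k-1)$-st power modulus, so the induction hypothesis simply does not apply to the object you obtain, and your bookkeeping claim that ``each descent step squares the gain'' is asserted, not derived. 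The paper never inducts on $k$: it fixes one fraction $r_1/q_1^k$, counts the others by Poisson summation over $\mathcal{O}_K$ (Lemma 2.3), which produces the exponential sums $S_k(q_1,r_1,j)$ with $q_2^k$ inside the character, performs $k-1$ rounds of Cauchy--Schwarz/Weyl differencing on the $q_2$-variable down to a \emph{linear} polynomial (this, not an induction, is where $\kappa=2^{k-1}$ comes from), applies Poisson summation a second time with an explicit Gaussian-type weight $\Psi_2$, and finishes with a lattice-point count plus the divisor bound in $\mathcal{O}_K$, together with H\"older to reassemble the $j$-sum. Without an argument of this kind (or a worked-out substitute), your central estimate for $\mathcal{A}(\alpha;\Delta)$ is unproved and the proof does not close.

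Two further points. First, your anticipated obstacle about non-principal greatest common divisors is an artifact of your route: the paper's argument uses only the divisor bound for elements of $\mathcal{O}_K$ and no unique factorization, which is precisely why Theorem 1.1 holds for every imaginary quadratic field, class number one entering only in Theorems 1.5 and 1.6. Second, your sharp-cutoff estimate $\bigl|\sum_{\norm(n)\le N}\widetilde{e}_K(n\beta)\bigr|\ll\min\bigl(N,\|\beta\|_K^{-2}\bigr)$ is false in general: the Fourier transform of the indicator of a disc decays only like $|\xi|^{-3/2}$, so the correct sharp-cutoff bound is weaker; this is repairable by inserting a smooth nonnegative majorant of the disc before expanding the square (the paper works with smooth weights $\Psi_1,\Psi_2$ throughout for exactly this reason), but as stated it is another gap in your chain of reductions.
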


    Our proof of Theorem \ref{mainthm} follows along similar lines as in \cite{Baier&Bansal}. In particular, we apply Poisson summation over number fields to treat the related counting problem. The choice of the additive character $\widetilde{e}_K(z)$ allows us to present our arguments more concisely. \newline

    We note that in the case of the Gaussian field, an improvement of Theorem \ref{mainthm} for the case of square moduli was given by Baier and Bansal in \cite[Theorem 3]{Baier&Bansa2} recently in the following form:
\begin{align}
\label{sqrimprv}
\sum\limits_{\substack{q\in \mathbb{Z}[i]\setminus\{0\}\\ \norm(q)\le Q}}
\sum\limits_{\substack{a \bmod{q^2}\\ (a,q)=1}} \left|\sum\limits_{\substack{n\in\mathbb{Z}[i]\\ \norm(n)\le N}}a_n\cdot e\left(\Re\left(\frac{na}{q^2}\right)\right)\right|^2
 \ll (QN)^{\varepsilon}\left(Q^3 + Q^2\sqrt{N} + N\right)\sum\limits_{\substack{n\in \mathbb{Z}[i]\\ \norm(n)\le N}} |a_n|^2.
 \end{align}

   In fact, a number of results are obtained in \cite{Baier&Bansa2} and we show in what follows that these results can be at least extended to the case of all imaginary quadratic number fields of class number one. To do so, we need the following generalizations of the notations introduced in \cite{Baier&Bansa2}. \newline

Let $|z|$ denote the modulus of $z \in \mc$ as a complex number and
 $$
 B(y,u)=\{z\in \mathbb{C} : |z-y|\le u\}$$
   for the closed ball with center $y$ and radius $u$.
   We let $\mathcal{S}$ be any set satisfying 
 $$\mathcal{S} \subseteq B(0,Q^{1/2}) \cap (\mathcal{O}_K\setminus \{0\}). $$
For any $t\in \mathcal{O}_K\setminus\{0\}$, we define
 $$
 \mathcal{S}_t = \{q\in \mathcal{O}_K : tq \in \mathcal{S}\}.
 $$
   We further define
 \begin{equation*}
 A_t(u,k,l) = \sup\limits_{\substack{y\in \mathbb{C}\\  |y| \le \frac{\sqrt{Q}}{|t|}}}\left|\{ q\in \mathcal{S}_t\cap B(y,u) : q\equiv l\bmod{k}\}\right|,
 \end{equation*}
 where $0\le u\le \sqrt{Q}/|t|$, $k\in \mathcal{O}_K\setminus\{0\}$ and $l\in \mathcal{O}_K$ with $(k,l) = 1$. \newline

    Our next two results in this paper extend the large sieve inequality given in \cite[Theorem 1]{Baier&Bansa2} for general sets $\mathcal{S}$ of moduli to any imaginary quadratic number field.
\begin{theorem}
\label{secdthm} Let $K$ be any imaginary quadratic number field. We have
\begin{equation*}
\begin{split}
\sum\limits_{q\in \mathcal{S}} &
 \sum\limits_{\substack{a \bmod{q}\\ (a,q)=1}} \left|\sum\limits_{\substack{n\in \mathcal{O}_K\\ \norm(n)\le N}}
a_n \cdot \widetilde{e}_K\left(\frac{na}{q}\right)\right|^2 \ll \\ &  N \left( 1 + \sup\limits_{\substack{r\in \mathcal{O}_K\setminus\{0\}\\ 1 \le |r| \le N^{1/4}}}
\sup\limits_{\substack{z\in \mathbb{C}\\ \frac{1}{N^{1/2}}\le |z| \le \frac{\sqrt{|D_K|}}{|r|N^{1/4}}}} \sup\limits_{\substack{h \in \mathcal{O}_K \\ (h,r) = 1}} \sum\limits_{t|r}
 \sum\limits_{\substack{m \in \mathcal{O}_K \\ 0<|m|\le \frac{3|rz|\sqrt{Q}}{|t|} \\ (m,\frac{r}{t}) = 1}}
 A_t \left( \frac{\sqrt{Q}}{\sqrt{N}|zt|},\frac{r}{t},hm \right)\right)\sum\limits_{\substack{n\in \mathcal{O}_K\\ \norm(n)\le N}} |a_n|^2.
\end{split}
\end{equation*}
\end{theorem}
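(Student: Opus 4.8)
The plan is to follow the strategy of Baier--Bansal \cite{Baier&Bansa2} for $\mathbb{Z}[i]$, adapting each step to the arithmetic of a general imaginary quadratic field $K$ with the additive character $\widetilde{e}_K$. The starting point is a duality argument: the left-hand side is bounded by $N$ times the supremum over admissible dual vectors of the number of pairs of moduli/residues whose associated exponential sums are close, which by an approximation (spacing) lemma for $\widetilde{e}_K$ reduces to counting solutions of a congruence. Concretely, after applying the standard large sieve duality and a partition into dyadic ranges, one is led to estimate, for fixed $n_1 \neq n_2$ with $\norm(n_i)\le N$, the number of quadruples $(q_1,a_1,q_2,a_2)$ with $q_i\in\mathcal S$, $(a_i,q_i)=1$, for which
$$
\left\| \frac{n_1 a_1}{q_1} - \frac{n_2 a_2}{q_2} \right\|
$$
is small in the relevant sense; clearing denominators turns this into the condition that $q_1 q_2 \mid (n_1 a_1 q_2 - n_2 a_2 q_1)$ up to a small error term coming from a ball $B(y,u)$ of radius $u = \sqrt{Q}/(\sqrt{N}|z|\cdot|t|)$.

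The key steps, in order, would be: (i) set up the duality and reduce to the counting problem for the differences, using that $\widetilde{e}_K(z)=1$ exactly when $z\in\mathcal O_K$ (after scaling by $\sqrt{D_K}$), so that the "resonance" condition is a genuine divisibility in $\mathcal O_K$; (ii) introduce the variables $r = q_1/(q_1,q_2)$ type quantities and the divisor $t\mid r$, exactly mirroring the notation $\mathcal S_t$, $A_t(u,k,l)$ set up in the excerpt, so that $q_2 = t q$ ranges over $\mathcal S_t$ and the congruence $q \equiv hm \bmod (r/t)$ emerges with $h$ the inverse of $n_1$ (or the relevant unit combination) mod $r/t$ and $m$ a new summation variable parametrizing $n_1 a_1 q_2 - n_2 a_2 q_1$; (iii) bound the number of $q$ in each residue class inside the ball by $A_t\big(\sqrt{Q}/(\sqrt{N}|zt|),\, r/t,\, hm\big)$; (iv) sum over $t\mid r$, over $m$ in the range $0<|m|\le 3|rz|\sqrt Q/|t|$ (this range coming from bounding $|n_1 a_1 q_2 - n_2 a_2 q_1|$ via $|q_i|\le\sqrt Q$, $|a_i|\le |q_i|$, $|n_i|\le N^{1/2}$ and the size of $z$), and over the admissible $r$ with $1\le |r|\le N^{1/4}$ and the admissible $z$; (v) take suprema to obtain the stated bound, checking that the constraints $(m,r/t)=1$ and $(h,r)=1$ are preserved throughout.

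The main obstacle I expect is the geometry-of-numbers bookkeeping in step (ii)–(iv): in $\mathbb{Z}[i]$ one uses that $\mathbb{Z}[i]$ is a PID with a clean square lattice, whereas for a general imaginary quadratic field one must work with $\mathcal O_K = \mathbb{Z}[\omega]$ (a rectangular or hexagonal lattice depending on $d\bmod 4$), so the passage from the archimedean condition $|z - \text{(ratio)}| \le \delta$ to a lattice-point count, and the control of gcd's when $\mathcal O_K$ is not a PID, need care; this is precisely why the statement is later restricted to class number one for the sharper corollaries. One must also verify that the constant $3$ and the exponents ($N^{1/4}$, $\sqrt{|D_K|}$, etc.) survive the change from $e(\Re(\cdot))$ to $\widetilde{e}_K(\cdot)$, which rescales by $\sqrt{D_K}$; tracking that factor consistently through the denominators is where the $\sqrt{|D_K|}$ in the range of $|z|$ comes from. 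Once the counting lemma is in place, assembling the final inequality is routine summation and an application of the divisor bound $\sum_{t\mid r}1 \ll |r|^{\varepsilon}$ absorbed into the supremum.
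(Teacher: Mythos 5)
Your final bookkeeping (steps (ii)--(iv)) has roughly the right shape, but the reduction you start from is not the one that proves the theorem, and as set up it cannot produce the parameters $r$, $z$, $h$ with the ranges appearing in the statement. The actual argument does not expand over pairs $n_1\neq n_2$ at all: one writes $n=s+t\omega_K$, views each fraction $a/q$ as a point of $\mathbb{R}^2/\mathbb{Z}^2$, applies the two-dimensional large sieve of Lemma \ref{lsR2} with $\Delta=1/N$, and is thereby reduced to bounding $\sup_{\alpha}P(\alpha)$, where $P(\alpha)$ counts pairs $(a,q)$ with $q\in\mathcal{S}$, $(a,q)=1$ and $a/q$ in a ball of radius $O(\Delta^{1/2})$ about an arbitrary $\alpha\in\mathbb{C}$ (after converting the torus distance into the complex modulus, at the cost of a factor $\sqrt{-d}$). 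The decisive step --- and the one missing from your proposal --- is the Dirichlet approximation theorem in $\mathbb{C}$ by elements of $K$ (Lemma \ref{LemmaDA}), applied with $\tau=\Delta^{-1/4}=N^{1/4}$: it writes $\alpha=b/r+z$ with $(b,r)=1$, $0<|r|\le N^{1/4}$ and $\Delta^{1/2}\le|z|\le\sqrt{|D_K|}/(|r|N^{1/4})$. This is the sole source of the variables $r$ and $z$ and of their ranges in the statement; $h$ is essentially the inverse of $\pm b$ modulo $r/t$, and $m$ parametrizes $ar-bq$, after which splitting according to $t\mid r$ and localizing $q/t\in\mathcal{S}_t$ yields the counts $A_t\bigl(\sqrt{Q}/(\sqrt{N}|zt|),r/t,hm\bigr)$.

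By contrast, in your step (i) you fix $n_1\neq n_2$ and count quadruples $(q_1,a_1,q_2,a_2)$ for which $n_1a_1/q_1-n_2a_2/q_2$ is small: the coefficients $n_i$ should not enter the spacing problem at all (after duality the resonance condition is that $a_1/q_1-a_2/q_2$ is small modulo $\mathcal{O}_K$-translates, independently of $n$), and ``$q_1q_2\mid(n_1a_1q_2-n_2a_2q_1)$ up to a small error'' is not a well-defined condition. More seriously, your identifications of $r$ as $q_1/(q_1,q_2)$ and of $h$ as the inverse of $n_1$ are wrong: with $r$ a ratio of moduli there is no reason for $|r|\le N^{1/4}$ (it can be as large as $\sqrt{Q}$), and nothing in your setup produces the auxiliary complex number $z$, whose range $N^{-1/2}\le|z|\le\sqrt{|D_K|}/(|r|N^{1/4})$ is exactly the Dirichlet-approximation error range; indeed you quote the radius $u=\sqrt{Q}/(\sqrt{N}|z||t|)$ before any $z$ exists in your argument. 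Without the approximation step the counting problem never acquires the structure of the right-hand side, so the proposal has a genuine gap at its core. Two minor points: the $\sqrt{|D_K|}$ comes from Minkowski's linear forms theorem inside Lemma \ref{LemmaDA}, not from rescaling the additive character; and no class-number-one hypothesis is needed here --- that enters only in Theorems \ref{squaremoduliK} and \ref{primemoduli}.
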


   Under certain conditions on the size of $A_t(u,k,l)$, we derive from Theorem \ref{secdthm} the following
\begin{theorem}
\label{thirdthm}
 Let $K$ be any imaginary quadratic number field. Suppose that for all $t$, $k$, $l$, $u$ with $|t|\le N^{1/4}$, $|k|\le N^{1/4}/|t|$, $(k,l) = 1$ and
$|k|\sqrt{Q}/(\sqrt{|D_K|}N^{1/4}) \le u \le \sqrt{Q}/|t|$, we have
\begin{equation*}
     A_t(u,k,l) \le \left(1 + \frac{|\mathcal{S}_t|/\norm(k)}{Q/|t|^2}\cdot u^2 \right)X.
\end{equation*}
   Then
\begin{equation*}
\sum\limits_{q\in \mathcal{S}}  \sum\limits_{\substack{a \bmod{q}\\ (a,q)=1}}   \left|\sum\limits_{\substack{n\in \mathcal{O}_K\\ \norm(n)\le N}}
a_n \cdot \widetilde{e}_K\left(\frac{na}{q}\right)\right|^2 \ll \left(N + QXN^{\varepsilon}\left(\sqrt{N} + |\mathcal{S}|\right)\right)\sum\limits_{\substack{n\in \mathcal{O}_K\\ \norm(n)\le N}} |a_n|^2.
\end{equation*}
\end{theorem}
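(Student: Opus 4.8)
The plan is to obtain Theorem~\ref{thirdthm} directly from Theorem~\ref{secdthm} by feeding the hypothesised upper bound for $A_t(u,k,l)$ into the supremum on the right-hand side of Theorem~\ref{secdthm}. Writing that right-hand side as $N(1+\Sigma)\sum_{\norm(n)\le N}|a_n|^2$, where
\[
\Sigma=\sup_{r,z,h}\ \sum_{t|r}\ \sum_{\substack{m\in\mathcal{O}_K\\ 0<|m|\le 3|rz|\sqrt{Q}/|t|\\ (m,r/t)=1}} A_t\!\left(\frac{\sqrt{Q}}{\sqrt{N}\,|zt|},\,\frac{r}{t},\,hm\right)
\]
and $r,z,h$ range as in Theorem~\ref{secdthm}, it suffices to prove $\Sigma\ll_{K,\varepsilon} QX N^{\varepsilon}\bigl(N^{-1/2}+|\mathcal{S}|/N\bigr)$; for then $N(1+\Sigma)\ll N+QXN^{\varepsilon}(\sqrt{N}+|\mathcal{S}|)$, which is exactly the asserted bound, the extra $N^{\varepsilon}$ being harmless.

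For fixed admissible $r,z,h$ and a divisor $t|r$, put $u=\sqrt{Q}/(\sqrt{N}\,|zt|)$, $k=r/t$ and $l=hm$. The first step is to verify that $(t,k,l,u)$ lies in the range where the hypothesis of Theorem~\ref{thirdthm} is available: from $|r|\le N^{1/4}$ and $t|r$ we get $|t|\le N^{1/4}$ and $|k|=|r|/|t|\le N^{1/4}/|t|$; the coprimality $(k,l)=1$ follows from $(h,r)=1$ (whence $(h,r/t)=1$, as $(r/t)|(r)$) together with $(m,r/t)=1$; and the two constraints $|z|\ge N^{-1/2}$ and $|z|\le \sqrt{|D_K|}/(|r|N^{1/4})$ from Theorem~\ref{secdthm} are precisely equivalent to $u\le\sqrt{Q}/|t|$ and $u\ge |k|\sqrt{Q}/(\sqrt{|D_K|}N^{1/4})$. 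Thus the hypothesis applies, and since $u^{2}/(Q/|t|^{2})=1/(N|z|^{2})$, it gives
\[
A_t\!\left(u,\tfrac{r}{t},hm\right)\le\Bigl(1+\frac{|\mathcal{S}_t|}{\norm(r/t)\,N|z|^{2}}\Bigr)X
\]
uniformly in the residue class $hm$.

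The crucial observation for the $m$-sum is that every nonzero element of $\mathcal{O}_K$ has modulus $\ge 1$, so the number of $m\in\mathcal{O}_K\setminus\{0\}$ with $|m|\le 3|rz|\sqrt{Q}/|t|$ is $\ll_K |rz|^{2}Q/|t|^{2}$ with \emph{no} additive term $+1$: if the radius $3|rz|\sqrt{Q}/|t|$ is less than $1$ there are no such $m$ at all, and otherwise the radius is $\ge 1$ and the usual lattice-point estimate in $\mathcal{O}_K$ applies. Multiplying this count by the displayed bound for $A_t$ and using $\norm(x)=|x|^{2}$ together with $\norm(t)\norm(r/t)=\norm(r)$, the cross term collapses to $Q|\mathcal{S}_t|/N$ and one gets $\sum_m A_t(u,r/t,hm)\ll_K\bigl(|rz|^{2}Q/|t|^{2}+Q|\mathcal{S}_t|/N\bigr)X$. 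Summing over $t|r$ I would then invoke the divisor-sum bounds $\sum_{t|r}\norm(t)^{-1}\ll_{\varepsilon}N^{\varepsilon}$ (valid since $\norm(r)\le N^{1/2}$) and $\sum_{t|r}|\mathcal{S}_t|=\sum_{s\in\mathcal{S}}\#\{t:t|r,\ t|s\}\ll_{\varepsilon}N^{\varepsilon}|\mathcal{S}|$, together with $|rz|\le\sqrt{|D_K|}\,N^{-1/4}$ (from the admissible range of $z$), to conclude $\sum_{t|r}\sum_m A_t\ll_{K,\varepsilon}N^{\varepsilon}QX\bigl(N^{-1/2}+|\mathcal{S}|/N\bigr)$. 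Since this bound is independent of $r,z,h$, it bounds $\Sigma$, and Theorem~\ref{thirdthm} follows.

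The argument has no single deep step; the real work is the calibration in the second paragraph — checking that the admissible ranges of $r$ and $z$ in Theorem~\ref{secdthm} match up exactly with the ranges of $k$ and $u$ in the hypothesis of Theorem~\ref{thirdthm} — and the discipline of keeping the lattice-point count for $m$ free of a spurious $+1$. It is precisely the latter that prevents the appearance of a contribution of size $N$ or $N|\mathcal{S}|$ and leaves only the two surviving terms that, after summation over $t$, reduce to $QN^{-1/2}X$ and $Q|\mathcal{S}|N^{-1}X$.
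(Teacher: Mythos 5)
Your derivation is correct and is essentially the argument the paper intends: the paper proves Theorem \ref{thirdthm} by deferring to the proof of Theorem 2 in Baier--Bansal, which is exactly this substitution of the hypothesised bound for $A_t$ into Theorem \ref{secdthm}, the calibration $u=\sqrt{Q}/(\sqrt{N}|zt|)$, $k=r/t$, $l=hm$ matching the admissible ranges, the lattice count for $m$ (with the radius-$<1$ case empty), and the divisor sums $\sum_{t|r}\norm(t)^{-1}\ll N^{\varepsilon}$ and $\sum_{t|r}|\mathcal{S}_t|\ll N^{\varepsilon}|\mathcal{S}|$. Your write-up correctly fills in the details the paper omits, including the cancellation $\norm(t)\norm(r/t)=\norm(r)$ that collapses the cross term to $Q|\mathcal{S}_t|/N$.
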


    Theorem \ref{thirdthm} allows us to generalize \eqref{sqrimprv} in the next theorem to all imaginary quadratic number field of class number one.
\begin{theorem}\label{squaremoduliK}
Let $K$ be any imaginary quadratic number field of class number one.  We have
 \begin{equation*}
\sum\limits_{\substack{q\in \mathcal{O}_K\setminus\{0\}\\ \norm(q)\le Q}}
\sum\limits_{\substack{a \bmod{q^2}\\ (a,q)=1}} \left|\sum\limits_{\substack{n\in\mathcal{O}_K \\ \norm(n)\le N}}a_n\cdot \widetilde{e}_K\left(\frac{na}{q^2}\right)\right|^2
 \ll (QN)^{\varepsilon}\left(Q^3 + Q^2\sqrt{N} + N\right) \sum\limits_{\substack{n\in \mathcal{O}_K\\ \norm(n)\le N}} |a_n|^2,
 \end{equation*}
 where $\varepsilon$ is any positive constant, and the implied constant $\ll$-constant depends only on $\varepsilon$.
 \end{theorem}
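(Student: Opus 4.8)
The plan is to deduce Theorem~\ref{squaremoduliK} from Theorem~\ref{thirdthm} by applying the latter to a dyadically localized family of square moduli. First I would break the outer sum over $q$ with $\norm(q)\le Q$ into $O(\log Q)$ ranges $Q_1/2<\norm(q)\le Q_1$, with $Q_1$ a power of $2$ and $Q_1\ll Q$; it then suffices to bound each range by $\ll N^{\varepsilon}\bigl(Q_1^{3}+Q_1^{2}\sqrt N+N\bigr)\sum_n|a_n|^2$. For a fixed $Q_1$ I apply Theorem~\ref{thirdthm} with
\[
\mathcal{S}=\mathcal{S}(Q_1):=\bigl\{\,q^{2}\ :\ q\in\mathcal{O}_K\setminus\{0\},\ Q_1/2<\norm(q)\le Q_1\,\bigr\},
\]
noting that $|q^{2}|=\norm(q)\le Q_1$ forces $\mathcal{S}\subseteq B(0,Q_1)\cap(\mathcal{O}_K\setminus\{0\})$, so the parameter playing the role of $Q$ in Theorems~\ref{secdthm}--\ref{thirdthm} is $Q_1^{2}$, while $|\mathcal{S}|\ll Q_1$. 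Up to a factor $\ll 1$ absorbing the units, the $Q_1$-range coincides with $\sum_{q\in\mathcal{S}}\sum_{a\bmod q}^{*}|\cdots|^2$, which Theorem~\ref{thirdthm} bounds by $\ll\bigl(N+Q_1^{2}XN^{\varepsilon}(\sqrt N+Q_1)\bigr)\sum_n|a_n|^2$. So everything reduces to verifying the hypothesis of Theorem~\ref{thirdthm} with $X\ll N^{\varepsilon}$.

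The heart of the matter is the count $A_t(u,k,l)$ for $\mathcal{S}=\mathcal{S}(Q_1)$. Using that $K$ has class number one, so $\mathcal{O}_K$ is a unique factorization domain, I would first describe $\mathcal{S}_t$ explicitly: writing $t=ab^{2}$ with $a$ squarefree, $tq=s^{2}$ forces $q=am^{2}$ and $s=abm$ up to units, hence
\[
\mathcal{S}_t=\Bigl\{\,am^{2}\ :\ m\in\mathcal{O}_K,\ \tfrac{Q_1/2}{\norm(ab)}<\norm(m)\le\tfrac{Q_1}{\norm(ab)}\,\Bigr\},\qquad \norm(ab)=\sqrt{\norm(a)\norm(t)},
\]
so that $|\mathcal{S}_t|\asymp Q_1/\norm(ab)$ when this is $\ge 1$ and $\mathcal{S}_t=\emptyset$ otherwise. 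The decisive structural feature, which is the reason for the dyadic localization, is that every element of $\mathcal{S}_t$ has complex modulus in the thin annulus $\bigl(Q_1/(2|t|),\,Q_1/|t|\,\bigr]$; therefore for \emph{every} admissible centre $y$ the points of $\mathcal{S}_t\cap B(y,u)$ lie at distance $\asymp Q_1/|t|$ from the origin, and their preimage under $m\mapsto am^{2}$ sits inside two discs of radius $\ll u|b|/\sqrt{Q_1}$. Combining the elementary lattice-point bound that a disc of radius $R$ contains $\ll 1+R^{2}/\norm(k)$ elements of $\mathcal{O}_K$ in any fixed residue class modulo $k$ with the divisor-type bound $\ll\norm(k)^{\varepsilon}$ for the number of square roots of a unit modulo $k$ (valid since $\mathcal{O}_K$ is a principal ideal domain), I obtain
\[
A_t(u,k,l)\ \ll\ \norm(k)^{\varepsilon}\Bigl(1+\frac{\norm(b)\,u^{2}}{Q_1\norm(k)}\Bigr)\ \ll\ \norm(k)^{\varepsilon}\Bigl(1+\frac{|\mathcal{S}_t|/\norm(k)}{Q_1^{2}/\norm(t)}\,u^{2}\Bigr),
\]
the last step using $\norm(b)=\sqrt{\norm(t)/\norm(a)}$, which makes the two main terms agree. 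Since the hypothesis is only needed for $|k|\le N^{1/4}/|t|$, so $\norm(k)\le N^{1/2}$, this confirms it with a uniform $X\ll N^{\varepsilon}$.

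Feeding this into Theorem~\ref{thirdthm} gives $\ll\bigl(N+Q_1^{2}N^{\varepsilon}(\sqrt N+Q_1)\bigr)\sum_n|a_n|^2$ for the $Q_1$-range; summing over the $O(\log Q)$ dyadic $Q_1\le Q$ and renaming $\varepsilon$ yields Theorem~\ref{squaremoduliK}. The step I expect to be the main obstacle is the counting estimate for $A_t$: one must treat with care the degenerate regimes --- the annulus defining $\mathcal{S}_t$ collapsing to (a neighbourhood of) the unit circle when $\norm(ab)$ is close to $Q_1$, and the regime $u\asymp Q_1/|t|$ in which $B(y,u)$ can cover a large portion of that annulus at once --- and one must control the number of square roots of a unit modulo $k$ at the ramified primes and at the primes above $2$; in every case, however, these features contribute at most a factor $\norm(k)^{\varepsilon}$ and so do not affect the final exponents.
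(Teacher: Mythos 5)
Your proposal is correct and follows essentially the route the paper intends: the paper defers the deduction of Theorem~\ref{squaremoduliK} to the Baier--Bansal argument, which is precisely your plan of applying Theorem~\ref{thirdthm} (with $Q$ replaced by $Q_1^2$) to dyadically localized sets of squares, describing $\mathcal{S}_t$ via unique factorization ($q=am^2$ up to units), and verifying the $A_t$-hypothesis with $X\ll N^{\varepsilon}$ through the two-preimage-disc geometry of the squaring map together with the $\ll\norm(k)^{\varepsilon}$ bound on square roots modulo $k$. The degenerate regimes you flag (tiny $|\mathcal{S}_t|$, $u\asymp Q_1/|t|$, ramified and even primes) indeed only affect constants or $\norm(k)^{\varepsilon}$ factors, so the argument goes through as you describe.
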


  Our last result derives from Theorem \ref{secdthm} a version of the large sieve for all imaginary quadratic number field of class number one when $\mathcal{S}$ is the full set of all primes with norm $\le Q$. This result can be regarded as an analogue of the above mentioned result of Wolke \cite{Wol} for prime moduli in the classical setting.
 \begin{theorem}\label{primemoduli}
 Let $K$ be any imaginary quadratic number field of class number one. Let $Q\geq 16$, $N = Q^{1+\delta}/16$, $0 < \delta < 1$. Then
 \begin{equation*}
 \sum\limits_{
 \norm(p)\le Q}\sum\limits_{\substack{a\bmod{p}\\ (a,p)=1}}\left|\sum\limits_{\substack{n\in \mathcal{O}_K\\ \norm(n)\le N}}
a_n \cdot \widetilde{e}_K\left(\frac{na}{p}\right)\right|^2 \ll
 \frac{1}{1-\delta}\cdot \frac{Q^2\log\log Q}{\log Q}\sum\limits_{\substack{n\in \mathcal{O}_K\\ \norm(n)\le N}} |a_n|^2,
 \end{equation*}
 where $p$ runs over the primes in $\mathcal{O}_K$.
 \end{theorem}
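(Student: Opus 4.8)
The plan is to derive Theorem \ref{primemoduli} from Theorem \ref{secdthm} by taking $\mathcal{S}$ to be the set of all prime elements $p\in\mathcal{O}_K$ with $\norm(p)\le Q$, one chosen from each associate class, and then controlling the triple supremum in Theorem \ref{secdthm}. The key point is that for a set $\mathcal{S}$ consisting of primes, the counting functions $A_t(u,k,l)$ become essentially trivial: since $tq\in\mathcal{S}$ forces $tq$ to be prime, we must have $t$ a unit (so $\mathcal{S}_t$ is, up to units, $\mathcal{S}$ itself) or $q$ a unit (so $\mathcal{S}_t$ has $O(1)$ elements). Thus in the sum $\sum_{t\mid r}$ only the bounded number of unit divisors $t$ contribute nontrivially, and for those $t$ we are counting primes $q$ with $\norm(q)\le Q$ lying in a ball $B(y,u)$ of radius $u\le \sqrt{Q}/(\sqrt{N}|z|)$ and in a fixed residue class $q\equiv hm\bmod r$ with $(hm,r)=1$.

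First I would reduce the inner sum. With $|z|\ge N^{-1/2}$ the radius satisfies $u\le \sqrt{Q}/(\sqrt{N}|z|)\le \sqrt{Q}$; but more importantly, in the regime $N=Q^{1+\delta}/16$ we have $u\le \sqrt{Q}/(\sqrt N|z|)$ and $|z|\ge N^{-1/2}$, which only gives $u\le\sqrt Q$, so I instead use the constraint on the $m$-sum: $0<|m|\le 3|rz|\sqrt Q/|t|$, combined with $|z|\le \sqrt{|D_K|}/(|r|N^{1/4})$, to bound the number of admissible $m$ by $O\big((|rz|\sqrt Q)^2\big)=O\big(|r|^2|z|^2 Q\big)\le O\big(\sqrt Q\,|D_K|\big)$ via the area of a disk in $\mathcal{O}_K\cong$ a lattice. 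Next, for $A_t(u,r,hm)$ with $t$ a unit: the number of primes $p$ with $\norm(p)\le Q$ in a disk of radius $u$ and in a residue class mod $r$ is, by a Brun–Titchmarsh type bound in $\mathcal{O}_K$ (available since the class number is one, so ideals are principal and one can invoke the large sieve \eqref{Huxley} or a sieve argument), $O\big(1+ \frac{u^2}{\norm(r)\log(u^2/\norm(r)+2)}\big)$, and crucially one gets a saving of $\log Q$ when $u^2\asymp Q$, i.e. the total count of primes up to $Q$ in the field is $O(Q/\log Q)$, refining to $O\big(Q/(\norm(r)\log Q)\big)$ in a fixed residue class. Summing $A_t$ over the $O(|r|^2|z|^2Q)$ values of $m$ (a fixed residue class for each) and over the $O(1)$ unit divisors $t$, then maximizing over $|r|\le N^{1/4}$ and the allowed range of $z$, and finally multiplying by $N$, should produce the bound $\frac{1}{1-\delta}\cdot\frac{Q^2\log\log Q}{\log Q}$; the $\log\log Q$ arises from summing $1/\norm(r)$ over $r\mid$ (relevant moduli) — i.e. $\sum_{\norm(r)\le N^{1/4}}\mu^2(r)/\norm(r)\ll \log\log Q$ after the divisor/$\gcd$ bookkeeping — and the $1/(1-\delta)$ from the relation $N=Q^{1+\delta}/16$ entering through $\log(u^2/\norm r)\gg (1-\delta)\log Q$.

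The main obstacle I expect is establishing the Brun–Titchmarsh inequality in the required uniform form over $\mathcal{O}_K$ — specifically, bounding the number of prime elements with norm $\le Q$ lying simultaneously in a small disk in $\mathbb{C}$ and in an arithmetic progression $\equiv l\bmod r$, with the correct $1/\log Q$ saving uniform in the disk and in $r$ up to $\norm(r)\le N^{1/4}$. For disks of radius $u$ with $u^2$ much smaller than $Q$ this is a genuine short-interval-type statement and one must argue that the relevant primes, being constrained to have norm $\le Q$ but also to lie in a thin disk, are still governed by a sieve upper bound; the cleanest route is probably to drop the norm constraint's sharpness and apply the Selberg sieve (or Huxley's large sieve \eqref{Huxley} in dual form) to the set of lattice points in the disk, obtaining $O\big(u^2/(\norm(r)\log(u+2))\big)$, and then note that the worst case for the final bound is $u^2\asymp Q$ where $\log(u+2)\asymp\log Q$, so the saving is as claimed. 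The remaining work — counting lattice points in disks, handling units and the finitely many ramified primes dividing $D_K$, and the elementary estimate $\sum\mu^2(r)/\norm(r)\ll\log\log Q$ — is routine but must be done carefully to track that the implied constant depends only on $K$ (through $|D_K|$) and not on $\delta$.
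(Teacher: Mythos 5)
Your overall route is the same as the one the paper intends: deduce Theorem \ref{primemoduli} from Theorem \ref{secdthm} by taking $\mathcal{S}$ to be the primes of norm at most $Q$, observe (using class number one and the finiteness of the unit group) that $\mathcal{S}_t$ has $O(1)$ elements unless $t$ is a unit, and estimate $A_t(u,r,hm)$ for unit $t$ by a two-dimensional Brun--Titchmarsh/Selberg-sieve bound for primes lying in a disc and in a reduced residue class modulo $r$; this is exactly the argument of Theorem 4 of Baier and Bansal that the paper imports. However, two of your key quantitative claims are wrong, and they are precisely the ones that are supposed to produce the stated bound. First, the sieve bound cannot have $\norm(r)$ in the denominator: sieving the $\asymp u^2/(\sqrt{|D_K|}\,\norm(r))$ lattice points of the progression $q\equiv hm \bmod r$ inside $B(y,u)$ by the primes not dividing $r$ gives an upper bound $\ll u^2/\bigl(\varphi_K(r)\log(u^2/\norm(r))\bigr)$, with the Euler function $\varphi_K(r)$ of $\mathcal{O}_K$, not with $\norm(r)$. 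The version you state, $O\bigl(1+u^2/(\norm(r)\log(u^2/\norm(r)+2))\bigr)$, is false when $r$ has many small prime factors (for $u^2$ large relative to $\norm(r)$ it contradicts the prime number theorem in progressions, since the true count is of size $u^2/(\varphi_K(r)\log u^2)$, larger by the factor $\norm(r)/\varphi_K(r)$ which can be $\gg\log\log\norm(r)$); and if your version were true, your own computation would give $Q^2/((1-\delta)\log Q)$ with no $\log\log Q$ at all, i.e.\ something stronger than the theorem, which should have been a warning sign.

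Second, your explanation of where $\log\log Q$ comes from is structurally impossible: Theorem \ref{secdthm} contains a \emph{supremum} over $r$, not a sum, so no quantity $\sum_{\norm(r)\le N^{1/4}}\mu^2(r)/\norm(r)$ ever appears; moreover that sum is $\asymp\log N$, not $\log\log Q$. The $\log\log Q$ in fact arises from the mismatch just described: the number of admissible $m$ is $\ll 1+|rz|^2Q=1+\norm(r)|z|^2Q$, and pairing this with the correct bound $A_t\ll 1+u^2/\bigl(\varphi_K(r)\log(u^2/\norm(r))\bigr)$, $u=\sqrt{Q}/(\sqrt{N}|z|)$, produces a main term $\ll \bigl(Q^2/(N\log(u^2/\norm(r)))\bigr)\cdot\bigl(\norm(r)/\varphi_K(r)\bigr)\ll Q^2\log\log Q/\bigl(N(1-\delta)\log Q\bigr)$, because $u^2/\norm(r)\ge Q/(|D_K|\sqrt{N})\gg Q^{(1-\delta)/2}$ and $\norm(r)/\varphi_K(r)\ll\log\log\norm(r)$; multiplying by $N$ gives the theorem. (One must also dispose of the secondary terms: the ``$1$ per $m$'' contribution, using that the $m$-range is nonempty only when $|z|\gg 1/(|r|\sqrt{Q})$, and the $O(1)$ contribution of the non-unit divisors $t\mid r$; these are routine and of smaller order.) So the architecture of your proof is the right one, but as written the central sieve input is false and the bookkeeping meant to generate the shape $\frac{1}{1-\delta}\cdot Q^2\log\log Q/\log Q$ does not correspond to the actual structure of Theorem \ref{secdthm}.
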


Our proofs of Theorems \ref{secdthm}--\ref{primemoduli} are slight modifications of the proofs of Theorems 1--4 in \cite{Baier&Bansa2}, the main ingredient being a Dirichlet approximation theorem in $\mc$ using elements in $K$ (see Lemma \ref{LemmaDA}).  We shall therefore only indicate the necessary modifications in Section \ref{sec 4} and skip most of the details. \newline
    
We end the section with the following remarks.  The condition of class number one in Theorems~\ref{squaremoduliK} and~\ref{primemoduli} ensures that the ring of integers is a unique factorization domain, a requirement in the proofs of those theorems.  It would also be interesting to work out the analogues of the theorems in this paper for real quadratic fields, as the situation there is quite different (the infinite group of units, for example).

\subsection{Notations} The following notations and conventions are used throughout the paper.\\
\noindent $e(z) = \exp (2 \pi i z) = e^{2 \pi i z}$. \newline
$f =O(g)$ or $f \ll g$ means $|f| \leq cg$ for some unspecified
positive constant $c$. \newline

\section{Preliminaries}
\label{sec 2}

\subsection{Imaginary quadratic number fields}
\label{sect: Kronecker}

    Let $K$ be an imaginary quadratic number field. Then we have $K=\mq(\sqrt{d})$ with $d$ a negative and square-free rational integer. The following facts concerning $K$ can be found in \cite[Section 3.8]{iwakow}. The ring of integers $\mathcal{O}_K$ is a free $\mz$ module, $\mathcal{O}_K=\mz+\omega_K \mz$, where
\begin{align*}
   \omega_K & =\begin{cases}
\displaystyle     \frac {1}{2}(1+\sqrt{d}) \qquad & \text{if $d \equiv 1 \pmod 4$ }, \\ \\
     \sqrt{d} \qquad & \text{if $d \equiv 2, 3 \pmod 4$ }.
    \end{cases}
\end{align*}

   Note that if we write $q=q_1+q_2\omega_K$, then
\begin{align*}
\norm(q)=\begin{cases}
   \displaystyle  q^2_1+q_1q_2+\frac {1-d}{4}q^2_2 \qquad & \text{if $d \equiv 1 \pmod 4$ }, \\ \\
   \displaystyle  q^2_1-dq^2_2 \qquad & \text{if $d \equiv 2, 3 \pmod 4$ }.
    \end{cases}
\end{align*}

\subsection{Poisson Summation in number fields}
      We note the following Poisson summation formula for
   $\mathcal{O}_K$ (see the proof of \cite[Lemma 4.1]{G&Zhao}), which is itself an easy consequence of the classical Poisson summation formula in
$2$ dimensions:
\begin{align*}
   \sum_{j \in \mathcal{O}_K}f(j)=\sum_{k \in
   \mathcal{O}_K}\widetilde{f}(k), \quad with \quad
   \widetilde{f}(k)=\iint\limits_{\mr^2}f(x+y\omega_K)\widetilde{e}_K\left( -k(x+y\omega_K) \right) \dif x \dif y.
\end{align*}

    We readily derive from the above Poisson summation formula that for any $b \in \mathcal{O}_K$, $Q>0$,
\begin{align}
\label{Poisson1}
\sum\limits_{x\in \mathcal{O}_K} \widetilde{e}_K \left(b \cdot x\right)f\left(\frac{x}{\sqrt{Q}}\right)
= Q \cdot \sum\limits_{y\in -b+\mathcal{O}_K} \widetilde{f}\left(\sqrt{Q}y\right).
\end{align}

   We shall also need the following Poisson summation formula for $K$:
\begin{lemma}
\label{Poissonsum} For any Schwartz class function $W$,  we have for all
$X>0$,
\begin{align*}
   \sum_{\substack{ m \in \mathcal{O}_K \\ m \equiv r \bmod n}}W\left(\frac {\norm(m)}{X}\right)=\frac {X}{\norm(n)}\sum_{k \in
   \mathcal{O}_K}\widetilde{W}_K\left(\sqrt{\frac {\norm(k)X}{\norm(n)}}\right)\widetilde{e}_K\left(\frac {kr}{n}\right),
\end{align*}
   where
\begin{align*}
   \widetilde{W}_K(t) &=\iint\limits_{\mr^2}W(\norm(x+y\omega_K))\widetilde{e}_K\left(- t(x+y\omega_K)\right)\dif x \dif y, \quad t
   \geq 0.
\end{align*}
\end{lemma}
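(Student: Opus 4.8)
The plan is to reduce the identity to the Poisson summation formula for $\mathcal{O}_K$ recalled above, by first parametrising the congruence class. Writing $m \equiv r \bmod n$ as $m = r + n\ell$ with $\ell$ running freely over $\mathcal{O}_K$, the left-hand side becomes $\sum_{\ell \in \mathcal{O}_K} F(\ell)$, where $F(z) := W\bigl(\norm(r+nz)/X\bigr)$ is regarded as a function on $\mc$ via $z = x + y\omega_K$; here I use that $\norm$ extends to $\mc$ by $\norm(v) = |v|^2$, so that the explicit formula for $\norm(x+y\omega_K)$ recorded earlier exhibits $\norm(r+nz)$ as a degree-two polynomial in $(x,y)$ with positive-definite leading part, whence $\norm(r+nz)\to+\infty$ as $|(x,y)|\to\infty$ and $F$ is a Schwartz function on $\mr^2$. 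The Poisson formula then gives $\sum_{\ell \in \mathcal{O}_K} F(\ell) = \sum_{k \in \mathcal{O}_K} \widetilde{F}(k)$ with both sides absolutely convergent.

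Next I would compute $\widetilde{F}(k) = \iint_{\mr^2} W\bigl(\norm(r + n(x+y\omega_K))/X\bigr)\, \widetilde{e}_K(-k(x+y\omega_K))\,\dif x\,\dif y$ by the linear change of variables $v_1 + v_2\omega_K = r + n(x+y\omega_K)$. As a map $\mr^2 \to \mr^2$ this is multiplication by $n$ followed by a translation, expressed in the $\mr$-basis $\{1,\omega_K\}$; its Jacobian is the determinant of multiplication by $n$ on $K$, namely $\norm(n)$. Since $\widetilde{e}_K$ is additive and $x + y\omega_K = (v_1+v_2\omega_K - r)/n$, this yields
\[
\widetilde{F}(k) = \frac{\widetilde{e}_K(kr/n)}{\norm(n)} \iint_{\mr^2} W\!\left(\frac{\norm(v_1+v_2\omega_K)}{X}\right) \widetilde{e}_K\!\left(-\frac{k(v_1+v_2\omega_K)}{n}\right) \dif v_1 \, \dif v_2 =: \frac{\widetilde{e}_K(kr/n)}{\norm(n)}\, J(k).
\]

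It then remains to identify $J(k)$ with $X\,\widetilde{W}_K\bigl(\sqrt{\norm(k)X/\norm(n)}\bigr)$. First I would rescale by $\sqrt{X}$, setting $v_1 + v_2\omega_K = \sqrt{X}\,(w_1+w_2\omega_K)$ (Jacobian $X$, and $\norm(v_1+v_2\omega_K)/X = \norm(w_1+w_2\omega_K)$), so that $J(k) = X \iint_{\mr^2} W(\norm(w_1+w_2\omega_K))\, \widetilde{e}_K(-\mu(w_1+w_2\omega_K))\,\dif w_1\,\dif w_2$ with $\mu := k\sqrt{X}/n \in \mc$. Assuming $k \neq 0$ (the case $k = 0$ is immediate) and writing $\mu = |\mu|\, e^{i\phi}$, I would then substitute $x + y\omega_K = e^{i\phi}(w_1+w_2\omega_K)$: as a real-linear change of variables this is multiplication by $e^{i\phi}$ in the basis $\{1,\omega_K\}$, hence it has Jacobian $1$ and preserves $\norm(\cdot) = |\cdot|^2$, while turning $\mu(w_1+w_2\omega_K)$ into $|\mu|(x+y\omega_K)$. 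Thus $J(k) = X \iint_{\mr^2} W(\norm(x+y\omega_K))\, \widetilde{e}_K(-|\mu|(x+y\omega_K))\,\dif x\,\dif y = X\,\widetilde{W}_K(|\mu|)$ by the definition of $\widetilde{W}_K$, while $|\mu| = \sqrt{X}\,|k|/|n| = \sqrt{\norm(k)X/\norm(n)}$. Feeding this back into the Poisson identity gives $\sum_{m \equiv r \bmod n} W(\norm(m)/X) = \frac{X}{\norm(n)}\sum_{k \in \mathcal{O}_K} \widetilde{W}_K\bigl(\sqrt{\norm(k)X/\norm(n)}\bigr)\,\widetilde{e}_K(kr/n)$, which is the assertion.

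The one step carrying any real content is the last substitution: the complex frequency $k\sqrt{X}/n$ enters $J(k)$ only through its modulus, because $W\circ\norm$ is rotation invariant and a rotation of $\mc$ has unit Jacobian in the lattice basis $\{1,\omega_K\}$ — this is precisely what makes the single-variable transform $\widetilde{W}_K$, defined for nonnegative arguments, the appropriate object here. Everything else is routine bookkeeping with the two elementary changes of variables (the scaling and the $n$-multiplication) together with the Schwartz hypothesis on $W$, which guarantees absolute convergence of all the sums and integrals involved.
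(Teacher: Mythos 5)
Your argument is correct: parametrising the residue class as $m=r+n\ell$, applying the $\mathcal{O}_K$ Poisson summation formula recalled in the paper, extracting the factor $\widetilde{e}_K(kr/n)/\norm(n)$ via the affine change of variables, and then using the scaling by $\sqrt{X}$ together with the rotation-invariance of $W\circ\norm$ (unit Jacobian since the determinant of multiplication by $e^{i\phi}$ is basis-independent) to reduce to the radial transform $\widetilde{W}_K$ at $t=\sqrt{\norm(k)X/\norm(n)}$ is exactly the intended derivation. The paper itself only states the lemma, citing the two-dimensional Poisson summation formula over $\mathcal{O}_K$, and your write-up supplies that standard deduction correctly.
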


It follows from \cite[(2.15)]{G&Zhao2} that for any $j \geq 1$,
\begin{align}
\label{Wbound}
     \widetilde{W}_K (t)\ll_j  \min \{ 1, \; t^{-j} \}.
\end{align}

\subsection{Dirichlet approximation in $\mc$}

     In the proof of Theorem \ref{secdthm}, we need the following version of the Dirichlet approximation theorem in $\mc$ which enables us to approximate $z \in \comc$ by an element of $K$.  This lemma generalizes the result given in \cite[Theorem 4.5]{D&K}.
\begin{lemma} \label{LemmaDA}
  Given any $z=x+iy \in \mc$ and $N \in \mn$, there exist algebraic integers $p=p_1+p_2\omega_K, q=q_1+q_2\omega_K$ in $\mathcal{O}_K$ with $0<|q| \leq N$ such that
\begin{align}
\label{Dirichletapprox}
  \Big |z-\frac {p}{q} \Big | \leq \frac {\sqrt{|D_K|}}{|q|N}.
\end{align}
\end{lemma}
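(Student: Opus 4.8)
**The plan is to adapt the classical pigeonhole argument for Dirichlet approximation to the two-dimensional lattice $\mathcal{O}_K \subset \mathbb{C}$.** The key geometric fact is that $\mathcal{O}_K = \mathbb{Z} + \omega_K\mathbb{Z}$ is a full-rank lattice in $\mathbb{C} \cong \mathbb{R}^2$ whose fundamental domain has area $\sqrt{|D_K|}/2$ (this is the covolume; indeed $\mathrm{Im}(\omega_K) = \sqrt{|d|}$ or $\tfrac{1}{2}\sqrt{|d|}$ according to the congruence class of $d$, and in either case matches $\tfrac12\sqrt{|D_K|}$). First I would consider the set of lattice points $q = q_1 + q_2\omega_K$ with, say, $q_1, q_2$ ranging over $\{0,1,\dots,M\}$ for a suitable integer $M \approx N/\sqrt{|D_K|}^{1/2}$ chosen so that there are more than $N^2$ (or an appropriate count of) such $q$ while $|q| \le N$; actually it is cleaner to let $q$ run over all of $\mathcal{O}_K \cap B(0,N)$, of which there are $\gg N^2/\sqrt{|D_K|}$ many. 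Then I would look at the fractional parts $\{qz\} \in \mathbb{C}/\mathcal{O}_K$, i.e. the images of $qz$ in the fundamental parallelogram, and partition that parallelogram into small congruent sub-parallelograms (boxes of the lattice scaled down). By pigeonhole, two distinct values $q', q''$ land in the same sub-box, so $q := q'' - q'$ satisfies $|q| \le N$ (after arranging the range appropriately) and $qz$ lies within one sub-box diameter of $\mathcal{O}_K$; choosing $p \in \mathcal{O}_K$ to be the nearest lattice point gives $|qz - p|$ small, hence $|z - p/q| \le |qz-p|/|q|$.

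**The delicate point is the bookkeeping of constants so that the final bound is exactly $\sqrt{|D_K|}/(|q|N)$ rather than merely $O(1/(|q|N))$.** The standard trick is: cover $B(0,N) \cap \mathcal{O}_K$ by at least $N^2/(\text{covolume})$ lattice points, and partition the fundamental domain into that same number of sub-parallelograms. If there are $T$ sub-parallelograms, each is a scaled copy of the fundamental domain with covolume $\sqrt{|D_K|}/(2T)$; the key is that the diameter of a fundamental-domain-shaped box of covolume $V$ for this particular lattice can be bounded — one shows the fundamental parallelogram spanned by $1$ and $\omega_K$ has the property that the distance from any point in it to the nearest vertex (lattice point) is at most $\sqrt{|D_K|}/2$ divided by something. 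Actually the cleanest route, following \cite[Theorem 4.5]{D&K}, is to use a counting argument with $\lfloor N/u \rfloor$-type grids in each coordinate $q_1, q_2$ and track that $|q| \le N$ forces the error box to have the stated size; I would make $M = \lfloor N \rfloor$ or similar and verify $|q_1 + q_2\omega_K| \le N$ for $0 \le q_1, q_2 \le$ (something), using the explicit norm form from Section~\ref{sect: Kronecker}.

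**I expect the main obstacle to be the purely geometric estimate controlling the shape of the sublattice boxes**, i.e. showing that when the fundamental domain of $\mathcal{O}_K$ (a parallelogram, not a square) is cut into $Q^2$ small pieces, each piece is contained in a disk of radius $\le \sqrt{|D_K|}/(2Q)$ — so that the pigeonhole pair yields $|qz - p| \le \sqrt{|D_K|}/(2Q) \cdot$(something), and the factor $|D_K|$ rather than $\sqrt{|D_K|}$ or $\sqrt{|d|}$ comes out correctly. Here one must be careful that the parallelogram spanned by $1$ and $\omega_K$ is not too ``stretched'': its shorter diagonal or the relevant width is comparable to $\sqrt{|D_K|}/2$, and the inradius-type quantity that governs the nearest-lattice-point distance is bounded by $\sqrt{|D_K|}/2$. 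Once that geometric lemma is pinned down, the rest is a routine pigeonhole count: choose the number of boxes to be $\ge N^2 \cdot 2/\sqrt{|D_K|}$ (to match the lattice-point count in $B(0,N)$), extract the collision, set $q = q''-q'$ and $p = $ nearest lattice point to $qz$, and divide through by $|q|$ to obtain \eqref{Dirichletapprox}. I would also record the trivial but necessary observation that $q \ne 0$ since $q' \ne q''$, so $0 < |q| \le N$.
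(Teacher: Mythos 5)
Your overall strategy (a two--dimensional Dirichlet pigeonhole on the torus $\mathbb{C}/\mathcal{O}_K$) is a genuinely different route from the paper, which instead rewrites \eqref{Dirichletapprox} as $|zq-p|\le \sqrt{|D_K|}/N$, splits into real and imaginary parts, and applies Minkowski's linear forms theorem to the resulting system of four linear forms in $p_1,p_2,q_1,q_2$. That choice is not cosmetic: with the bounds $\sqrt{|D_K|}/(\sqrt{2}N)$ on the two approximation forms and the stated bounds on $q_1,q_2$, the product of the four bounds exactly equals the determinant of the system, so Minkowski gives a nonzero solution with no room to spare. This already signals that the constant $\sqrt{|D_K|}$ sits precisely at the threshold where a naive box-counting pigeonhole has no slack, and this is exactly where your proposal is not closed.

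Concretely, the step you yourself flag as ``the delicate point'' fails for the specific choices you put forward. (i) If $q'$ runs over $\mathcal{O}_K\cap B(0,N)$, the collision difference only satisfies $|q''-q'|\le 2N$, not $\le N$; to keep $|q|\le N$ you must shrink to $B(0,N/2)$ or to an asymmetric box, which changes the count. (ii) With your primary suggestion $q_1,q_2\in\{0,\dots,M\}$ you need (say for $d\equiv 2,3\bmod 4$) $M^2(1+|d|)\le N^2$, giving about $N^2/(1+|d|)$ points; but any partition of the fundamental parallelogram (area $\sqrt{|D_K|}/2$) into congruent sub-parallelograms of diameter $\le\sqrt{|D_K|}/N$ needs at least about $N^2/\sqrt{|D_K|}=N^2/(2\sqrt{|d|})$ cells, and $1+|d|>2\sqrt{|d|}$ for every $|d|>1$: there are fewer points than boxes for every field except $\mathbb{Q}(i)$, so the pigeonhole simply does not fire with the constant $\sqrt{|D_K|}$. (iii) Even the optimally balanced rectangle $0\le q_1\le A\approx N/\sqrt{2}$, $0\le q_2\le B\approx N/\sqrt{2|d|}$ only matches the minimal box count to leading order ($(A+1)(B+1)$ and $ML$ are both $N^2/(2\sqrt{|d|})+O(N)$, and after integer rounding they generically coincide), so the strict inequality ``more points than boxes'' is not guaranteed. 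The approach can be rescued — e.g.\ take $q'\in\mathcal{O}_K\cap B(0,N/2)$, whose density $\pi/4$ per unit of the relevant count beats the $1/2$ required by rectangular cells, and then prove a lattice-point lower bound in that disc uniform in $N$ and $d$, with a separate elementary treatment when $N\lesssim\sqrt{|d|}$ (where the disc contains only rational integers) — but none of this is in your sketch, and it is precisely the missing content of the lemma. As written, the proposal identifies the obstacle but does not overcome it, whereas the paper's Minkowski argument disposes of it in one stroke.
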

\begin{proof}
The inequality in \eqref{Dirichletapprox} can be written as
\begin{align*}
  \left| x+iy-\frac {p_1+p_2\omega_K}{q_1+q_2\omega_K} \right| \leq \frac {\sqrt{|D_K|}}{|q_1+q_2\omega_K|N}.
\end{align*}
  We recast the above inequality as
\begin{align}
\label{DATbound}
  \left| ( x+iy)(q_1+q_2\omega_K)-(p_1+p_2\omega_K) \right| \leq \frac {\sqrt{|D_K|}}{N}.
\end{align}

   We simplify the above inequality according to the value of $\omega_K$. When $\omega_K=\sqrt{d}$, we rewrite it as
\begin{align*}
  \left| \left( q_1x+\sqrt{-d}q_2y-p_1 \right)+i \left( q_1y+q_2x\sqrt{-d}-p_2\sqrt{-d} \right) \right| \leq \frac {\sqrt{|D_K|}}{N}.
\end{align*}
  Then inequality \eqref{DATbound} holds if
\begin{align*}
  \max \left\{ \left| q_1x+\sqrt{-d}q_2y-p_1 \right|, \; \left| q_1y+q_2x\sqrt{-d}-p_2\sqrt{-d} \right| \right\} \leq \frac {\sqrt{|D_K|}/\sqrt{2}}{N}.
\end{align*}

    Now, by Minkowski's linear forms theorem (see \cite[p. 67, Theorem 1.41]{P&S}), the system of inequalities:
\begin{align*}
 & \left| q_1x+\sqrt{-d}q_2y-p_1 \right| \leq \frac {\sqrt{|D_K|}/\sqrt{2}}{N}, \\
 & \left| q_1y+q_2x\sqrt{-d}-p_2\sqrt{-d} \right| \leq \frac {\sqrt{|D_K|}/\sqrt{2}}{N},\\
 & |q_1| \leq 2^{-1/2}N, \\
 & |q_2| \leq 2^{-1/2}N/\sqrt{-d}
\end{align*}
   has a non-zero solution in integers $p_1, p_2, q_1, q_2$.  Hence \eqref{Dirichletapprox} has a solution with $0<|q|=|q_1+q_2\omega_K| \leq N$. \newline

    When $\omega_K= \left( 1+\sqrt{d} \right)/2$, we rewrite inequality \eqref{DATbound} as
\begin{align*}
  \left| \left( q_1x+q_2x/2+\sqrt{-d}q_2y/2-p_1-p_2/2 \right)+i \left( q_1y+q_2x\sqrt{-d}/2+q_2y/2-p_2\sqrt{-d}/2 \right) \right| \leq \frac {\sqrt{|D_K|}}{N}.
\end{align*}
  Then inequality \eqref{DATbound} holds if we have
\begin{align*}
  \max \left\{ \left| q_1x+q_2x/2+\sqrt{-d}q_2y/2-p_1-p_2/2 \right|, \; \left| q_1y+q_2x\sqrt{-d}/2+q_2y/2-p_2\sqrt{-d}/2 \right| \right\} \leq \frac {\sqrt{|D_K|}/\sqrt{2}}{N}.
\end{align*}

    Again, it follows from Minkowski's linear forms theorem that the system of inequalities:
\begin{align*}
 & |q_1x+q_2x/2+\sqrt{-d}q_2y/2-p_1-p_2/2| \leq \frac {\sqrt{|D_K|}/\sqrt{2}}{N}, \\
 & |q_1y+q_2x\sqrt{-d}/2+q_2y/2-p_2\sqrt{-d}/2| \leq \frac {\sqrt{|D_K|}/\sqrt{2}}{N},\\
 & |q_1+q_2/2| \leq 2^{-1/2}N, \\
 & |q_2| \leq 2^{1/2}N/\sqrt{-d}
\end{align*}
  has a non-zero solution in integers $p_1, p_2, q_1, q_2$.  Hence \eqref{Dirichletapprox} has a solution with $0<|q|=|q_1+q_2\omega_K| \leq N$. This completes the proof of the lemma.
\end{proof}

\subsection{Large sieve for $\mathbb{R}^m$}
 Let $s=(s_1,s_2,..,s_m) \in \mathbb{R}^m$.  We denote the Euclidean norm of $s$ by $\| s \|_2$. Thus,
$$
\| s \|_2=\sqrt{\sum^m_{i=1}s_i^2}.
$$
    In the proof of our results, we shall also need the following two versions of the large sieve. The first one is valid for all $m$, which is established in \cite[Theorem 3]{Baier&Bansal}:
\begin{lemma} \label{ls} Let $R,N\in \mathbb{N}$, $N\ge 2$, $x_1, \ldots,x_R\in \mathbb{R}^m$ and $(a_n)_{n\in \mathbb{Z}^m}$
be any $m$-fold sequence of complex numbers. Then
$$
\sum\limits_{i=1}^R \left|\sum\limits_{\substack{n\in \mathbb{Z}^m\\ \| n \|_2\le N^{1/m}}}
a_{n} \cdot e\left(n\cdot x_i\right)\right|^2 \ll FN\sum\limits_{\substack{n\in \mathbb{Z}^m\\ \| n \|_2\le N^{1/m}}} |a_n|^2,
$$
where
$$
F=\max\limits_{1\le i\le R} \sharp \left\{j\in \{1,\ldots,R\} : \min\limits_{z\in \mathbb{Z}^m} \| x_j-x_i-z \|_2\le \frac{\sqrt{m}}{N^{1/m}} \right\}.
$$
\end{lemma}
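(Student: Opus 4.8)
The plan is to deduce this from a Sobolev embedding inequality together with Parseval's identity on the torus $\mathbb{T}^m=\mathbb{R}^m/\mathbb{Z}^m$, so that the factor $F$ enters only as the multiplicity in a covering argument and no duality is needed. Write $S(\xi)=\sum_{\|n\|_2\le N^{1/m}}a_n\,e(n\cdot\xi)$, which is a trigonometric polynomial on $\mathbb{T}^m$, hence smooth. I would first dispose of the case $N\le 2^m$: there $S$ has only $O_m(1)$ nonzero coefficients, so $|S(x_i)|^2\ll_m\sum_n|a_n|^2$ by Cauchy--Schwarz, and summing over $i$ and using the packing bound $R\ll_m NF$ --- a maximal $N^{-1/m}$-separated subset of $\{x_1,\dots,x_R\}$ has $\ll_m N$ elements, and each of its elements is within $N^{-1/m}\le\sqrt m\,N^{-1/m}$ of at most $F$ of the $x_i$ --- gives the assertion. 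Hence I may assume $r:=N^{-1/m}<\tfrac12$, so that each ball $B(x_i,r)\subset\mathbb{T}^m$ is isometric to a Euclidean ball of radius $r$ in $\mathbb{R}^m$.

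Next I would fix $s=\lfloor m/2\rfloor+1>m/2$ and apply the Sobolev embedding $H^{s}(B_1)\hookrightarrow L^\infty(B_1)$ on the unit ball; rescaling by $r$ about the point $x_i$ yields, for each $i$,
\begin{equation*}
|S(x_i)|^2\ \le\ C_m\sum_{0\le|\alpha|\le s}r^{2|\alpha|-m}\int_{B(x_i,r)}\bigl|\partial^\alpha S(\xi)\bigr|^2\,\dif\xi .
\end{equation*}
Summing this over $i$ and interchanging the finite $\alpha$-sum with the integral, the crucial ingredient is the uniform multiplicity bound
\begin{equation*}
\#\bigl\{\,i:\ \xi\in B(x_i,r)\,\bigr\}\ =\ \#\bigl\{\,i:\ x_i\in B(\xi,r)\,\bigr\}\ \ll_m\ F
\end{equation*}
valid for every $\xi\in\mathbb{T}^m$: if $B(\xi,r)$ meets $\{x_i\}$ at some point $x_{i_0}$ then $B(\xi,r)\subseteq B(x_{i_0},2r)$, and $B(x_{i_0},2r)$ is covered by $O_m(1)$ balls of radius $\tfrac12\sqrt m\,r$, each of which has diameter $\sqrt m\,N^{-1/m}$ and hence contains at most $F$ of the $x_i$, by the very definition of $F$. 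This gives
\begin{equation*}
\sum_{i=1}^R|S(x_i)|^2\ \ll_m\ F\sum_{0\le|\alpha|\le s}r^{2|\alpha|-m}\int_{\mathbb{T}^m}\bigl|\partial^\alpha S(\xi)\bigr|^2\,\dif\xi .
\end{equation*}

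Finally I would evaluate the integrals by Parseval on $\mathbb{T}^m$: since $\partial^\alpha S(\xi)=(2\pi i)^{|\alpha|}\sum_n n^\alpha a_n\,e(n\cdot\xi)$ and $|n^\alpha|\le\|n\|_2^{|\alpha|}\le N^{|\alpha|/m}$, one has $\int_{\mathbb{T}^m}|\partial^\alpha S|^2\le(2\pi)^{2|\alpha|}N^{2|\alpha|/m}\sum_n|a_n|^2$, and the powers of $N$ balance exactly, $r^{2|\alpha|-m}N^{2|\alpha|/m}=N^{1-2|\alpha|/m}N^{2|\alpha|/m}=N$, for every $\alpha$. Thus each of the $O_m(1)$ terms in the $\alpha$-sum contributes $O_m(N)\sum_n|a_n|^2$, and therefore $\sum_i|S(x_i)|^2\ll_m FN\sum_n|a_n|^2$, which is the assertion (the implied constant depending on $m$ through the covering number and the Sobolev exponent $s$). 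The part needing the most care is the multiplicity estimate above: one has to arrange the radii so that balls centred at an arbitrary $\xi$ may be replaced, at the cost of a dimensional constant only, by balls of radius $\sqrt m\,N^{-1/m}$ centred at the sample points $x_i$ --- which is precisely the shape in which $F$ is defined --- and one must check that $r=N^{-1/m}$ is the unique scale making the final power count close up; the remaining steps are routine.
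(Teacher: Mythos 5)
Your proof is correct, but note that the paper itself contains no argument for Lemma \ref{ls}: it is quoted directly from \cite[Theorem 3]{Baier&Bansal}, so what you have produced is a self-contained substitute rather than a variant of an internal proof. Your route --- reducing the $x_i$ modulo $\mathbb{Z}^m$ (legitimate, since both the exponential sum and $F$ depend only on the residues), the rescaled Sobolev embedding $H^{s}(B_1)\hookrightarrow L^{\infty}(B_1)$ with $s=\lfloor m/2\rfloor+1$ playing the role of a multidimensional Gallagher lemma, the covering step that converts the pointwise multiplicity $\#\{i : x_i\in B(\xi,r)\}$ into $O_m(F)$ by passing to balls of radius $\tfrac{\sqrt m}{2}r$ whose diameter is exactly the threshold $\sqrt m N^{-1/m}$ in the definition of $F$, and Parseval with the exact balance $r^{2|\alpha|-m}N^{2|\alpha|/m}=N$ for every $\alpha$ --- is sound, and you were right to treat $N\le 2^m$ separately, where the packing bound $R\ll_m NF$ does the work. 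The more traditional way to reach statements of this shape is either a Gallagher--Sobolev inequality of the same flavour or a reduction to the well-spaced case: colour the graph on $\{x_1,\dots,x_R\}$ joining points at torus distance at most $\sqrt m N^{-1/m}$ (its maximum degree is at most $F-1$, so $F$ colours suffice), apply the standard well-spaced large sieve for $\mathbb{R}^m$ to each colour class, and sum over classes. That reduction buys brevity by quoting a known inequality; your argument buys self-containedness and makes the role of $F$ transparent as a covering multiplicity, at the harmless cost of implied constants depending on $m$ through the Sobolev and covering numbers (only $m=2$ is used in this paper).
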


   The next one is specific to the case $m=2$, which is established in \cite[Theorem 5]{Baier&Bansa2}:
\begin{lemma}\label{lsR2}
 Let $R, N\in \mathbb{N}$, $N\geq 2$, $x_1,\ldots,x_R \in \mathbb{R}^2$ and $(a_n)_{n\in \mathbb{Z}^2}$ be any double sequence of complex numbers. Suppose that $0<\Delta \le 1/2$. Set
 \begin{equation*}
   K_0(\Delta) = \sup\limits_{\alpha\in \mathbb{R}^2} \left|\left\{r\in\{1,2,...,R\} : \min\limits_{z\in\mathbb{Z}^2} \|x_r-\alpha-z\|_2\le \Delta^{1/2}\right\}\right|.
\end{equation*}
   Then
\begin{equation*}
\sum\limits_{i=1}^{R}\left|\sum\limits_{\substack{n\in \mathbb{Z}^2\\ \|n \|_2 \le N^{1/2}}}a_n\cdot e(n\cdot x_i)\right|^2 \ll K_0(\Delta)(N+ \Delta^{-1})\sum\limits_{\substack{n\in \mathbb{Z}^2 \\ \| n \|_2\le N^{1/2}}} |a_n|^2.
\end{equation*}
\end{lemma}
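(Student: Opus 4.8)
The plan is to derive Lemma~\ref{lsR2} from the duality principle for the large sieve together with a smoothed Poisson summation argument, following the template of the classical proof of the large sieve inequality. By duality, the asserted bound is equivalent to the dual inequality
\begin{equation*}
\sum_{\substack{n\in\mathbb{Z}^2\\ \|n\|_2\le N^{1/2}}}\Bigl|\sum_{i=1}^{R}b_i\, e(n\cdot x_i)\Bigr|^2 \ll K_0(\Delta)\bigl(N+\Delta^{-1}\bigr)\sum_{i=1}^{R}|b_i|^2 ,
\end{equation*}
valid for all complex numbers $b_1,\dots,b_R$, and it is this form that I would prove. The first step is to fix once and for all a nonnegative even Schwartz function $\psi$ on $\mathbb{R}^2$ with $\psi(t)\ge 1$ for $\|t\|_2\le c_1$ and with $\widehat{\psi}$ supported in $\|\xi\|_2\le c_0$, where $c_0,c_1>0$ are absolute constants; such a majorant is produced in the usual way as a suitably normalised $|\widehat{\chi}|^2$ with $\chi$ a smooth nonnegative bump supported near the origin. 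Rescaling, set $\psi_N(t)=\psi(c_1 t/N^{1/2})$, so that $\psi_N(n)\ge 1$ whenever $\|n\|_2\le N^{1/2}$, while $\widehat{\psi_N}$ is supported in the ball of radius $c_0c_1N^{-1/2}$ and satisfies $\|\widehat{\psi_N}\|_\infty\asymp N$ and $\widehat{\psi_N}(0)\asymp N$; one may clearly assume $N$ is larger than any fixed constant, since the remaining range is trivial (there $K_0(\Delta)(N+\Delta^{-1})$ already dominates the trivial bound $R\sum|a_n|^2$).

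Next I would insert $\psi_N$ as a majorant for the cutoff $\|n\|_2\le N^{1/2}$, expand the square, and apply Poisson summation on $\mathbb{R}^2$ to the inner sum over $n$:
\begin{equation*}
\sum_{n\in\mathbb{Z}^2}\psi_N(n)\Bigl|\sum_i b_i e(n\cdot x_i)\Bigr|^2 = \sum_{i,j}b_i\overline{b_j}\sum_{m\in\mathbb{Z}^2}\widehat{\psi_N}\bigl(m-(x_i-x_j)\bigr).
\end{equation*}
The diagonal terms $i=j$ contribute exactly $\widehat{\psi_N}(0)\sum_i|b_i|^2\asymp N\sum_i|b_i|^2$, which supplies the term $N$ in the bound. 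For the off-diagonal terms, the support hypothesis on $\widehat{\psi_N}$ forces the inner sum over $m$ to vanish unless $x_i-x_j$ lies within $c_0c_1N^{-1/2}$ of a point of $\mathbb{Z}^2$, in which case it is $O(N)$; applying $|b_i\overline{b_j}|\le\tfrac12(|b_i|^2+|b_j|^2)$, the entire off-diagonal part is $\ll N\,\mathcal{C}\sum_i|b_i|^2$, where
\begin{equation*}
\mathcal{C}=\sup_{1\le i\le R}\#\Bigl\{\, j\in\{1,\dots,R\} : \min_{z\in\mathbb{Z}^2}\|x_j-x_i-z\|_2\le c_0c_1N^{-1/2}\,\Bigr\}.
\end{equation*}

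The final step is to bound $\mathcal{C}\ll (1+(N\Delta)^{-1})K_0(\Delta)$; this is the one point at which the hypothesis $\Delta\le 1/2$ enters, ensuring that balls of radius $\Delta^{1/2}$ behave like genuine Euclidean balls modulo $\mathbb{Z}^2$, so that $K_0(\Delta)$ is a meaningful quantity. If $c_0c_1N^{-1/2}\le\Delta^{1/2}$, then the set counted in $\mathcal{C}$ is contained in the ball of radius $\Delta^{1/2}$ about $x_i$ modulo $\mathbb{Z}^2$, so $\mathcal{C}\le K_0(\Delta)$ directly from the definition (taking $\alpha=x_i$); otherwise one covers that ball of radius $c_0c_1N^{-1/2}$ by $O(1+(N\Delta)^{-1})$ balls of radius $\Delta^{1/2}$ and sums the bound $K_0(\Delta)$ over the covering. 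In either case $\mathcal{C}\ll (1+(N\Delta)^{-1})K_0(\Delta)$, so the off-diagonal contribution is $\ll K_0(\Delta)(N+\Delta^{-1})\sum_i|b_i|^2$; combined with the diagonal estimate this yields the dual inequality and hence Lemma~\ref{lsR2}. I expect the only real obstacle to be technical: constructing the majorant $\psi$ with a compactly supported Fourier transform, and carrying out the covering step carefully enough to match the two a priori unrelated scales $N^{-1/2}$ and $\Delta^{1/2}$; the rest is routine bookkeeping.
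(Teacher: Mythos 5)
Your proposal cannot be compared with an in-paper argument, because the paper does not prove this statement at all: Lemma~\ref{lsR2} is imported verbatim from \cite[Theorem 5]{Baier&Bansa2}, and the authors only cite it. Judged on its own, your argument is sound and is the standard route to such results: duality, a nonnegative majorant of the ball $\|n\|_2\le N^{1/2}$ whose Fourier transform is supported in a ball of radius $\asymp N^{-1/2}$ (the $|\widehat{\chi}|^2$ construction is fine), Poisson summation to isolate a diagonal term $\asymp N$ and an off-diagonal term controlled by the count $\mathcal{C}$ of points within torus-distance $\asymp N^{-1/2}$ of a given $x_i$, and finally a covering of the radius-$c_0c_1N^{-1/2}$ ball by $O(1+(N\Delta)^{-1})$ balls of radius $\Delta^{1/2}$ to get $\mathcal{C}\ll (1+(N\Delta)^{-1})K_0(\Delta)$, hence the bound $K_0(\Delta)(N+\Delta^{-1})$. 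Note also that Lemma~\ref{ls} is the special case $\Delta\asymp 1/N$ of this statement, which is consistent with your scheme. Two small points deserve to be written out. First, your reduction to large $N$: the trivial bound is $RN\sum_n|a_n|^2$ (Cauchy--Schwarz over the $O(N)$ lattice points), and to see that this is $\ll K_0(\Delta)(N+\Delta^{-1})\sum_n|a_n|^2$ for bounded $N$ you need the pigeonhole observation that covering the torus by $O(\Delta^{-1})$ balls of radius $\Delta^{1/2}$ gives $K_0(\Delta)\gg R\Delta$, so $K_0(\Delta)\Delta^{-1}\gg R$; as stated, ``dominates the trivial bound $R\sum|a_n|^2$'' is asserted without this step. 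Second, in the off-diagonal bookkeeping after $|b_i\overline{b_j}|\le\tfrac12(|b_i|^2+|b_j|^2)$ you should invoke the symmetry of the relation $\min_{z\in\mathbb{Z}^2}\|x_i-x_j-z\|_2\le c_0c_1N^{-1/2}$ so that both halves are bounded by $\mathcal{C}\sum_i|b_i|^2$; this is routine but is exactly the kind of step that should appear explicitly. With these additions your sketch is a complete and correct proof.
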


\section{Proof of Theorem \ref{mainthm}}

\subsection{A general treatment}
   Let
\begin{align}
\label{Tdef}
T=\sum\limits_{\substack{q\in S\\ Q/2<\norm(q)\le Q}} \
\sum\limits_{\substack{r \bmod{q}\\ (r,q)=1}} \left|\sum\limits_{\substack{n\in \mathcal{O}_K \\ \norm(n)\le N}}
a_n \cdot \widetilde{e}_K\left (\frac{nr}{q}\right)\right|^2.
\end{align}
  Here $S$ is an arbitrary multiset of elements of $\mathcal{O}_K \setminus \{0\}$.  We shall first estimate $T$ in general and later restrict $S$ to the set of $k$-th powers. \newline

  We now write $n=s+t\omega_K$ and note that for any $z \in \mc$, we have
\begin{align*}
   \widetilde{e}_K(zn)=e\left ( \left (\frac {z-\overline{z}}{\sqrt{D_K}}, \frac {z\omega_K-\overline{z}\overline{\omega}_K}{\sqrt{D_K}}\right ) \cdot \left (s, t \right ) \right ).
\end{align*}

  We apply this to rewrite $T$ as
\begin{align}
\label{T}
\begin{split}
T= & \sum\limits_{\substack{q\in S\\ Q/2<\norm(q)\le Q}} \ \sum\limits_{\substack{r \bmod{q}\\ (r,q)=1}} \left|\sum\limits_{\substack{n\in \mathcal{O}_K\\ \norm(n)\le N}}
a_n \cdot e\left(\left(\frac{r/q-\overline{r/q}}{\sqrt{D_K}},\frac{r/q\omega_K-\overline{r/q}\overline{\omega}_K}{\sqrt{D_K}}\right)\cdot (s,t)\right)\right|^2 \\
=& \sum\limits_{\substack{q\in S\\ Q/2<\norm(q)\le Q}} \ \sum\limits_{\substack{r \bmod{q}\\ (r,q)=1}} \left|\sum\limits_{\substack{(s,t) \in \mathbb{Z}^2 \\ \|(s,t) \|_2 \le \sqrt{N}}}
a_{s,t} \cdot e\left(\left(\frac{r/q-\overline{r/q}}{\sqrt{D_K}},\frac{r/q \omega_K-\overline{r/q}\overline{\omega}_K}{\sqrt{D_K}}\right)\cdot (s,t)\right)\right|^2,
\end{split}
\end{align}
   where we define (note that $n=s+t\omega_K$)
\begin{align*}
   a_{s,t} & =\begin{cases}
     a_n \qquad & \text{if $\norm(n) \leq N$ }, \\
   0 \qquad & \text{otherwise}.
    \end{cases}
\end{align*}
   The second equality in \eqref{T} then follows by observing that $\norm(n) \leq N$ implies that $\|(s,t) \|_2 \le \sqrt{N}$ . \newline

   We now apply Lemma \ref{ls} with $m=2$ to \eqref{T} to see that we have
\begin{align}
\label{Tbound}
   T\ll EN\sum\limits_{\substack{n\in \mathcal{O}_K\\ \norm(n)\le N}} |a_n|^2,
\end{align}
where
\begin{equation*}
\begin{split}
E= \max\limits_{r_1,q_1} & \sharp \Bigg\{(r_2,q_2), r_2 \pmod {q_2}, (r_2, q_2)=1: \\
& \min\limits_{z\in \mathbb{Z}^2}
\left\| \left(\frac{r_2/q_2-\overline{r_2/q_2}}{\sqrt{D_K}},\frac{r_2/q_2 \omega_K-\overline{r_2/q_2}\overline{\omega}_K}{\sqrt{D_K}}\right)-
\left(\frac{r_1/q_1-\overline{r_1/q_1}}{\sqrt{D_K}},\frac{r_1/q_1 \omega_K-\overline{r_1/q_1}\overline{\omega}_K}{\sqrt{D_K}}\right) -z \right\|_2^2
\le \frac{1}{N} \Bigg\}.
\end{split}
\end{equation*}
   Here we make the conventions that for $j=1,2$,
$$
q_j\in S,\quad Q/2<\norm(q_j)\le Q.
$$

   We note that by writing $q=q_1+q_2\omega_K$ with $q_1, q_2 \in \mr$, we have (noting that $\omega_K-\overline{\omega}_K=\sqrt{D_K}$)
\begin{align}
\label{realandimagpart}
   \frac {q-\overline{q}}{\sqrt{D_K}}=q_2, \quad \frac {q\omega_K-\overline{q} \overline{\omega}_K }{\sqrt{D_K}}=q_1+q_2(\omega_K+\overline{\omega}_K).
\end{align}

   From this we see that by writing $z=(z_1, z_2)$ and replacing $r_2$ by
\begin{align*}
\begin{cases}
\displaystyle     r_2-q_2(z_2+z_1\omega_K) \qquad & \text{if \quad $\omega_K=\sqrt{d}$ }, \\
     r_2-q_2(z_2-z_1+z_1\omega_K) \qquad & \text{if \quad $\omega_K=\frac {1+\sqrt{d}}{2}$ },
    \end{cases}
\end{align*}
  we can drop the requirement for $r_2$ to run over the set of residue classes modulo $q_2$ so that $r_2$ is now regarded as an algebraic integer co-prime
  to $q_2$. We thus deduce that $E$ is majorized by
\begin{equation*}
\begin{split}
\max\limits_{r_1,q_1} \sharp \Bigg\{(r_2,q_2), (r_2, q_2)=1 : \left\| \left(\frac{r_2/q_2-\overline{r_2/q_2}}{\sqrt{D_K}},\frac{r_2/q_2 \omega_K-\overline{r_2/q_2}\overline{\omega}_K}{\sqrt{D_K}}\right)-
\left(\frac{r_1/q_1-\overline{r_1/q_1}}{\sqrt{D_K}},\frac{r_1/q_1 \omega_K-\overline{r_1/q_1}\overline{\omega}_K}{\sqrt{D_K}}\right) \right\|_2^2
\le \frac{1}{N} \Bigg\}.
\end{split}
\end{equation*}

   It follows from \eqref{realandimagpart} that
\begin{align}
\label{L2bound}
  \left\| \left( \frac {z-\overline{z}}{\sqrt{D_K}}, \frac {z\omega_K-\overline{z}\overline{\omega}_K}{\sqrt{D_K}} \right) \right\|_2^2 \geq |d|^{-1}\norm(z).
\end{align}
  We then obtain from \eqref{L2bound} that
\begin{equation*}
\begin{split}
E \leq  & \max\limits_{\substack{q_1\in S\\ Q/2<\norm(q_1)\le Q\\ (r_1,q_1)=1}}
\sharp \Bigg\{(r_2,q_2),(r_2, q_2)=1 : q_2\in S,\ Q/2<\norm(q_2)\le Q,\ (r_2,q_2)=1, |d|^{-1} \norm \left(\frac{r_2}{q_2}-\frac{r_1}{q_1} \right)
\le N^{-1}\Bigg\}.
\end{split}
\end{equation*}

   We now choose two Schwartz class functions $\Phi_{i}$ for $i=1,2$ satisfying
$\Phi_{i}(x)\gg 1$ when $|x|\le 1$. We can let $\Phi_{1}$ to be arbitrary and we shall fix $\Phi_{2}$ later. We further define
$$
\Psi_i=\Phi_i \circ \norm \quad \mbox{for} \; i=1, \; 2.
$$
  Using these notations together with the observation that
$$
|d|^{-1} \norm \left( \frac{r_2}{q_2}-\frac{r_1}{q_1} \right)
\le N^{-1} \Longleftrightarrow \norm\left(r_1q_2-r_2q_1\right)
\le \frac{|d|\norm(q_1)\norm(q_2)}{N},
$$
  we infer that
\begin{equation} \label{Ktransform}
\begin{split}
E\le & \max\limits_{\substack{q_1\in S\\ Q/2<\norm(q_1)\le Q\\ (r_1,q_1)=1}} \
\sum\limits_{\substack{b\in \mathcal{O}_K \\ \norm(b)\le |d|\norm(q_1)\norm(q_2)/N}} \
\sum\limits_{\substack{q_2\in S,\\ Q/2<\norm(q_2)\le Q\\  b\equiv r_1q_2 \bmod{q_1}}} 1 \\
\ll & \max\limits_{\substack{q_1\in S\\ Q/2<\norm(q_1)\le Q\\ (r_1,q_1)=1}} \
\sum\limits_{b\in \mathcal{O}_K} \Phi_1\left(\norm\left(\frac{b\sqrt{N}}{q_1\sqrt{|d|Q}}\right)\right)
\sum\limits_{\substack{q_2\in S,\\ b\equiv  r_1q_2\bmod{q_1}}} \Phi_2\left(\norm\left(\frac{q_2}{\sqrt{Q}}\right)\right)\\
= & \max\limits_{\substack{q_1\in S\\ Q/2<\norm(q_1)\le Q\\ (r_1,q_1)=1}} \
\sum\limits_{q_2\in S} \Phi_2\left(\norm\left(\frac{q_2}{\sqrt{Q}}\right)\right)\cdot
\sum\limits_{b\equiv r_1q_2 \bmod{q_1}} \Phi_1\left(\norm\left(\frac{b\sqrt{N}}{q_1\sqrt{|d|Q}}\right)\right) \\
= & \max\limits_{\substack{q_1\in S\\ Q/2<\norm(q_1)\le Q\\ (r_1,q_1)=1}} \
\sum\limits_{q_2\in S} \Psi_2\left(\frac{q_2}{\sqrt{Q}}\right)\cdot
\sum\limits_{b\equiv r_1q_2 \bmod{q_1}} \Psi_1\left(\frac{b\sqrt{N}}{q_1\sqrt{|d|Q}}\right).
\end{split}
\end{equation}

Applying the Poisson summation formula, Lemma \ref{Poissonsum}, yields
\begin{equation} \label{afterpoisson}
\begin{split}
\sum\limits_{b\equiv r_1q_2 \bmod{q_1}} \Psi_1\left(\frac{b\sqrt{N}}{q_1\sqrt{|d|Q}}\right)
= & \frac{|d|Q}{N}\cdot \sum\limits_{j\in \mathcal{O}_K}\widetilde{e}_K \left ( \frac {jr_1q_2}{q_1} \right)
\widetilde{\Phi}_{1,K} \left(\sqrt{\frac {\norm(j)|d|Q}{N}}\right).
\end{split}
\end{equation}

So from \eqref{Ktransform} and \eqref{afterpoisson}, we get that
\begin{equation} \label{Kgen1}
E \ll \frac{Q}{N}\cdot\max\limits_{\substack{q_1\in S\\ Q/2<\norm(q_1)\le Q\\ (r_1,q_1)=1}} \
\sum\limits_{j\in \mathcal{O}_K} \widetilde{\Phi}_{1,K}\left(\sqrt{\frac {\norm(j)|d|Q}{N}}\right)
\sum\limits_{q_2\in S} \Psi_2\left(\frac{q_2}{\sqrt{Q}}\right)\cdot
\widetilde{e}_K \left ( \frac {jr_1q_2}{q_1} \right).
\end{equation}

\subsection{Weyl differencing}
Now we take $S$ as the set of non-zero $k$-th powers in $\mathcal{O}_K$. We write $Q_0=Q^{1/k}$
and replace $q_i$ by $q_i^k$ ($i=1,2$).  In what follows, we assume that $Q_0>N^{1/(2k)}$ for otherwise the desired result follows from
\eqref{Huxley} upon extending the set of moduli to all non-zero integers in $\mathcal{O}_K$. We further note that \eqref{Wbound} gives
$$
\sum\limits_{q_2\in \mathcal{O}_K} \Psi_2\left(\frac{q_2^k}{Q^{k/2}_0}\right) \ll Q_0.
$$
  We use the above estimate to bound the contribution of $j=0$ on the right-hand side of \eqref{Kgen1} to see that
\begin{equation} \label{generalKbound}
\begin{split}
 E \ll & \frac{Q_0^k}{N}\cdot \max\limits_{\substack{Q_0/\sqrt[k]{2}<\norm(q_1)\le Q_0\\ (r_1,q_1)=1}}\sum\limits_{j\in \mathcal{O}_K} \widetilde{\Phi}_{1,K}\left(\sqrt{\frac {\norm(j)|d|Q_0^k}{N}}\right) \cdot
\sum\limits_{q_2\in \mathcal{O}_K} \Psi_2\left(\frac{q_2^k}{Q^{k/2}_0}\right)\cdot
\widetilde{e}_K \left ( \frac {jr_1q^k_2}{q^k_1} \right) \\
\ll & \frac{Q_0^{k+1}}{N}+ \frac{Q_0^k}{N}\cdot \max\limits_{\substack{Q_0/\sqrt[k]{2}<\norm(q_1)\le Q_0\\ (r_1,q_1)=1}}
\sum\limits_{j\in \mathcal{O}_K \setminus \{0\}} \widetilde{\Phi}_{1,K}\left(\sqrt{\frac {\norm(j)|d|Q_0^k}{N}}\right) \cdot
\left| S_k\left(q_1,r_1,j\right)\right|,
\end{split}
\end{equation}
where
$$
S_k\left(q_1,r_1,j\right)=\sum\limits_{q_2\in \mathcal{O}_K} \Psi_2\left(\frac{q_2^k}{Q^{k/2}_0}\right)\cdot
\widetilde{e}_K \left ( \frac {jr_1q^k_2}{q^k_1} \right).
$$

Multiplying out the square and setting $\alpha_1=q_2-q$, we obtain
\begin{equation*}
\begin{split}
 \left|S_k\left(q_1,r_1,j\right) \right|^2 =  & \sum\limits_{q_2,q\in \mathcal{O}_K } \Psi_2\left(\frac{q_2^k}{Q_0^{k/2}}\right) \cdot \Psi_2\left(\frac{q^k}{Q_0^{k/2}}\right) \cdot
\widetilde{e}_K \left( \frac{jr_1}{q_1^k}\cdot(q_2^k-q^k)  \right)\\
 =  & \sum\limits_{\alpha_1,q\in \mathcal{O}_K }\Psi_2\left(\frac{q^k}{Q_0^{k/2}}\right) \cdot \Psi_2\left(\frac{(\alpha_1+q)^k}{Q_0^{k/2}}\right)
 \cdot \widetilde{e}_K \left( \frac{jr_1}{q_1^k}\cdot\left((\alpha_1 + q)^k-q^k  \right) \right).
 \end{split}
 \end{equation*}
We observe that the contribution of  $q, \alpha$'s with $\norm\left(q \right)$, $\norm\left(\alpha_1+ q \right)>Q_0^{1+\varepsilon}$ is negligible, it thus follows that the contribution of $\alpha_1$'s with $\norm\left(\alpha_1\right)>Q_0^{1+\varepsilon}$ is negligible. We
write
$$
P_{k-1,\alpha_1}(q) = (\alpha_1 + q)^k - q^k = \binom{k}{1} \cdot \alpha_1 q^{k-1} + \binom{k}{2}\cdot \alpha_1^2q^{k-2} +\cdots +
\binom{k}{k}\cdot \alpha_1^k,
$$
and get that
\begin{equation*}
\left|S_k\left(q_1,r_1,j\right) \right|^2 \ll \left|\sum\limits_{\norm(\alpha_1)\le Q_0^{1+\varepsilon}} S_{k-1}\left(q_1,r_1,j,\alpha_1\right)\right|,
 \end{equation*}
 where
 $$
 S_{k-1}\left(q_1,r_1,j,\alpha_1\right):=\sum\limits_{q\in \mathcal{O}_K }
\Psi_2\left(\frac{q^k}{Q_0^{k/2}}\right) \cdot \Psi_2\left(\frac{(\alpha_1+q)^k}{Q_0^{k/2}}\right)
\cdot \widetilde{e}_K \left(\frac{jr_1}{q_1^k}\cdot P_{k-1,\alpha_1}(q) \right).
 $$
 If $k > 2$, the Cauchy-Schwarz inequality gives
 \begin{equation*}
 \left| S_k\left(q_1,r_1,j\right) \right|^4 \ll
 Q_0^{1+ \varepsilon}\sum\limits_{\substack{\alpha_1 \in \mathcal{O}_K  \\ \norm(\alpha_1) \le Q_0^{1+\varepsilon}}}
 \left| S_{k-1}\left(q_1,r_1,j,\alpha_1\right)  \right|^2.
 \end{equation*}
Multiplying out the square, changing variables and truncating the resulting sums in a similar way as above, we obtain
\begin{equation*}
\begin{split}
 & \left| S_{k-1}\left(q_1,r_1,j,\alpha_1\right)
 \right|^2\\
 & \ll  \left| \sum\limits_{\substack{\alpha_2\in \mathcal{O}_K \\ \norm(\alpha_2)\le Q_0^{1+\varepsilon}}} \sum\limits_{q \in \mathcal{O}_K }
 \Psi_2\left(\frac{q^k}{Q_0^{k/2}}\right)  \Psi_2\left(\frac{(\alpha_1 + q)^k}{Q_0^{{k/2}}}\right)
 \Psi_2\left(\frac{(\alpha_2 + q)^k}{Q_0^{k/2}}\right) \Psi_2\left(\frac{(\alpha_1 + \alpha_2 + q)^k}{Q_0^{{k/2}}}\right)\cdot
   \widetilde{e}_K\left(\frac{jr_1}{q_1^k}\cdot P_{k-2,\alpha_1,\alpha_2}(q)\right) \right|,
 \end{split}
 \end{equation*}
where
\begin{equation*}
P_{k-2,\alpha_1,\alpha_2}(q) = k(k-1)\alpha_1 \alpha_2 q^{k-2} + \cdots
\end{equation*}
is a polynomial of degree $k-2$ in $q$. We continue this process of repeated use of Cauchy-Schwarz and differencing until we have reached
a linear polynomial so that
\begin{equation*}
\begin{split}
& \left|S_k\left(q_1,r_1,j\right) \right|^{\kappa} \ll  Q_0^{\kappa-k+ \varepsilon} \sum\limits_{\substack{\alpha \in \mathcal{O}_K^{k-1} \\
\norm\left(\alpha_1\right),...,\norm\left(\alpha_{k-1}\right)\le Q_0^{1+\varepsilon}}} \left|
\sum\limits_{q \in \mathcal{O}_K } \prod_{u\in \{0,1\}^{k-1}}
\Psi_2\Bigg(\frac{(u\cdot \alpha+q)^k}{Q_0^{k/2}}\Bigg) \cdot
\widetilde{e}_K\left(\frac{jr_1}{q_1^k}\cdot P_{1,\alpha}(q)\right) \right|,
 \end{split}
\end{equation*}
where $\kappa=2^{k-1}$, $\alpha=\left(\alpha_1,...,\alpha_{k-1}\right)$, $u=\left(u_1,...,u_k\right)$, $u\cdot \alpha$ is the standard inner product and
\[ P_{1,\alpha}(q) = k! \alpha_1\cdots \alpha_{k-1}\cdot \left(q + \frac{1}{2}\cdot \left(\alpha_1+\cdots +\alpha_{k-1}\right)\right). \]

\subsection{Poisson summation}

We now specify our choice of $\Psi_2$ by setting
$$
\Phi_2(t)=\exp\left(-\frac{\pi}{\kappa}\cdot \sqrt[k]{|t|}\right) \quad \mbox{so that} \quad
\Psi_2(z)=\Phi_2(\norm(z))=\exp\left(-\frac{\pi}{\kappa}\cdot \sqrt[k]{\norm(z)}\right).
$$

We further set
$$
g(z)=\prod_{u\in \{0,1\}^{k-1}}
\Psi_2\left(\left(z + \frac{u\cdot \alpha}{\sqrt{Q_0}}\right)^k\right).
$$
Then by taking
$$
 b=\frac{k! \alpha_1\cdots \alpha_{k-1}jr_1}{q_1^k},
$$
we obtain that
\begin{equation*}
\begin{split}
& \Bigg | \sum\limits_{q\in \mathcal{O}_K} \prod_{u\in \{0,1\}^{k-1}}
\Psi_2\Bigg(\frac{(u\cdot \alpha+q)^k}{Q_0^{k/2}}\Bigg) \cdot
\widetilde{e}_K\left(\frac{jr_1}{q_1^k}\cdot P_{1,\alpha}(q) \right) \Bigg |
= \Bigg |\sum\limits_{x\in \mathcal{O}_K} \widetilde{e}_K\left(b\cdot x\right)g\left(\frac{x}{\sqrt{Q_0}}\right)\Bigg | .
\end{split}
\end{equation*}
Applying \eqref{Poisson1} gives
\[ \sum\limits_{x\in \mathcal{O}_K} \widetilde{e}_K \left(b \cdot x\right)g\left(\frac{x}{\sqrt{Q_0}}\right)
= Q_0\cdot \sum\limits_{y\in -b+\mathcal{O}_K} \widetilde{g}\left(\sqrt{Q_0}y\right). \]

It follows that
\begin{equation} \label{newpoiss1}
\begin{split}
& \left|S_k\left(q_1,r_1,j\right) \right|^{\kappa} \ll  Q_0^{\kappa-k+1+ \varepsilon}
\sum\limits_{\substack{\alpha \in \mathcal{O}_K^{k-1} \\
\norm\left(\alpha_1\right),...,\norm\left(\alpha_{k-1}\right)\le Q_0^{1+\varepsilon}}}
\sum\limits_{\beta\in \mathcal{O}_K}  \widetilde{g}\left(\sqrt{Q_0}\cdot
\left(\beta-\frac{k! \alpha_1\cdots \alpha_{k-1}jr_1}{q_1^k}\right)\right).
\end{split}
\end{equation}

   To compute the Fourier transform of $g(z)$, we note that
\begin{align*}
g(z)=\begin{cases}
   \displaystyle  \exp\left(-\frac{\pi}{\kappa}\cdot \sum\limits_{u\in \{0,1\}^{k-1}} \left(
z_{1,u}^2+z_{1,u}z_{2,u}+\frac {1-d}{4}z_{2,u}^2\right)\right) \qquad & \text{if $d \equiv 1 \pmod 4$ }, \\ \\
   \displaystyle  \exp\left(-\frac{\pi}{\kappa}\cdot \sum\limits_{u\in \{0,1\}^{k-1}} \left( z_{1,u}^2-dz_{2,u}^2\right) \right ) \qquad & \text{if $d \equiv 2, 3 \pmod 4$ }.
    \end{cases}
\end{align*}
where we write
\begin{align*}
z=z_1+z_2\omega_K, \quad  \alpha= \alpha^{(1)}+ \alpha^{(2)}\omega_K, \quad z_{i,u}=z_i+\frac{u\cdot \alpha^{(i)}}{\sqrt{Q_0}}, \quad i=1, \; 2.
\end{align*}

Completing the squares, we deduce that
$$
g(z)=\exp\left(-\frac{\pi}{\kappa Q_0}\cdot
\left( \sum\limits_{u\in \{0,1\}^{k-1}} \norm (u\cdot \alpha) - \frac 1{\kappa}
  \norm \left( \sum\limits_{u\in \{0,1\}^{k-1}} u\cdot \alpha \right)\right) \right ) \cdot
\exp\left(-\pi\norm \left(z+\frac{\sum\limits_{v=1}^{k-1}\alpha_v^{(i)}}{2\sqrt{Q_0}}\right)^2\right).
$$
  A direct computation shows that the Fourier transform of $g(z)$ is
\begin{equation}
\label{gfourier}
\begin{split}
\widetilde{g}(z)= & A\exp\left(-\frac{\pi}{\kappa Q_0}\cdot
\left( \sum\limits_{u\in \{0,1\}^{k-1}} \norm (u\cdot \alpha) - \frac 1{\kappa}
  \norm \left( \sum\limits_{u\in \{0,1\}^{k-1}} u\cdot \alpha \right)\right) \right ) \cdot \widetilde{e}_K \left(\frac {\sum\limits_{v=1}^{k-1}\alpha_v^{(i)} z}{2\sqrt{Q_0}} \right)\cdot
\exp\left(-\frac {\pi}{|d|} \norm(z)\right),
\end{split}
\end{equation}
where $A$ is some constant whose value depends only on $d$. \newline

 As a direct consequence of the triangle inequality for norms, we note that
\begin{align*}
   \sum\limits_{u\in \{0,1\}^{k-1}} \norm (u\cdot \alpha) - \frac 1{\kappa}
  \norm \left( \sum\limits_{u\in \{0,1\}^{k-1}} u\cdot \alpha \right)  \geq 0.
\end{align*}
  We then deduce by plugging \eqref{gfourier} into \eqref{newpoiss1} that
\begin{equation} \label{finalafterps1}
\begin{split}
& \left|S_k\left(q_1,r_1,j\right) \right|^{\kappa} \ll  Q_0^{\kappa-k+1+\varepsilon}
\sum\limits_{\substack{\alpha \in \mathcal{O}_K^{k-1} \\
\norm\left(\alpha_1\right),...,\norm\left(\alpha_{k-1}\right)\le Q_0^{1+\varepsilon}}}
\sum\limits_{\beta\in \mathcal{O}_K} \exp\left(-\frac {\pi Q_0}{|d|}\norm
\left(\beta-\frac{k!\alpha_1\cdots \alpha_{k-1}j r_1}{q_1^k}\right)\right).
\end{split}
\end{equation}

\subsection{Counting}
To bound the sum in the maximum in \eqref{generalKbound}, we first note that by \eqref{Wbound},
\begin{equation} \label{countbeg}
\begin{split}
\sum\limits_{j\in \mathcal{O}_K \setminus \{0\}} \widetilde{\Phi}_{1,K} \left(\sqrt{\frac {\norm(j)|d|Q_0^k}{N}}\right) \cdot
\left| S_k\left(q_1,r_1,j\right)\right| \ll & 1+ \sum\limits_{\substack{j\in \mathcal{O}_K \setminus \{0\}\\
\norm(j)\le NQ_0^{\varepsilon-k}}}
\left| S_k\left(q_1,r_1,j\right)\right|\\
\ll & 1+ \left(\frac{N}{Q_0^{k-\varepsilon}}\right)^{1-1/\kappa} \left(\sum\limits_{\substack{j\in \mathcal{O}_K \setminus \{0\}\\
\norm(j)\le NQ_0^{\varepsilon-k}}}
\left| S_k\left(q_1,r_1,j\right)\right|^{\kappa}\right)^{1/\kappa},
\end{split}
\end{equation}
where the second line follows from  H\"older's inequality. Using \eqref{finalafterps1} and taking into account that
the contributions of $\beta$'s with
$$
\norm\left(\beta-\frac{k!\alpha_1\cdots \alpha_{k-1}j r_1}{q_1^k}\right) > Q_0^{\varepsilon-1}
$$
is negligible, we arrive at
\begin{equation*}
 \sum\limits_{\substack{j\in \mathcal{O}_K \setminus \{0\}\\
\norm(j)\le NQ_0^{\varepsilon-k}}}
\left| S_k\left(q_1,r_1,j\right)\right|^{\kappa}
\ll
Q_0^{\kappa-k+1+ \varepsilon}
\sum\limits_{\substack{j\in \mathcal{O}_K \setminus \{0\}\\
\norm(j)\le NQ_0^{\varepsilon-k}}} \
\sum\limits_{\substack{\alpha \in \mathcal{O}_K^{k-1} \\
\norm\left(\alpha_1\right),...,\norm\left(\alpha_{k-1}\right)\le Q_0^{1+\varepsilon}}} \
\sum\limits_{\substack{\beta\in \mathcal{O}_K \\
\norm\left(\beta-k!\alpha_1\cdots \alpha_{k-1}j r_1/q_1^k\right)\le Q_0^{\varepsilon-1}}} 1.
\end{equation*}

  Writing $d=k!\alpha_1\cdots \alpha_{k-1}j$ and noting that the number of divisors of $d\in \mathcal{O}_K\setminus \{0\}$ is bounded by
$O\left(\norm(d)^{\varepsilon}\right)$, we obtain
\begin{equation} \label{super1}
\begin{split}
 \sum\limits_{\substack{j\in \mathcal{O}_K \setminus \{0\}\\
\norm(j)\le NQ_0^{\varepsilon-k}}}
\left| S_k\left(q_1,r_1,j\right)\right|^{\kappa}
\ll & Q_0^{\kappa-k+1 +\varepsilon}\cdot\sum\limits_{\substack{j\in \mathcal{O}_K \setminus \{0\}\\
\norm(j)\le NQ_0^{\varepsilon-k}}} \
\sum\limits_{\substack{\alpha \in \mathcal{O}_K^{k-1} \\
\norm\left(\alpha_1\right),...,\norm\left(\alpha_{k-1}\right)\le Q_0^{1+\varepsilon}}} \
\sum\limits_{\substack{\beta, d \in \mathcal{O}_K \\
\norm\left(\beta-d r_1/q_1^k\right)\le Q_0^{\varepsilon-1}}} 1\\
\ll & (NQ_0)^{(k+1)\varepsilon} \cdot Q_0^{\kappa-k+1}\cdot \Bigg(\frac{N}{Q_0^{2}}+ \sum\limits_{\substack{d\in \mathcal{O}_K \setminus\{0\}\\
\norm(d)\le (k!)^2NQ_0^{k\varepsilon-1}}} \ \sum\limits_{\substack{\beta \in \mathcal{O}_K \\
\norm\left(\beta-d r_1/q_1^k\right)\le Q_0^{\varepsilon-1}}} 1\Bigg)\\
\ll & (NQ_0)^{(k+1)\varepsilon} \cdot \Big(NQ_0^{\kappa-k-1}+ Q_0^{\kappa-k+1}\cdot
\sum\limits_{\substack{l\in \mathcal{O}_K\\ \norm(l/q_1^k) \le Q_0^{\varepsilon-1}}}
\sum\limits_{\substack{\norm(d)\le (k!)^2NQ_0^{k\varepsilon-1}\\ d\equiv l\overline{r}_1 \bmod{q_1^k}}} 1\Bigg).
\end{split}
\end{equation}

  Observing that the number of residue classes modulo $q_1^{k}$ is $\norm(q_1^k)\le Q_0^k$, we get
\begin{equation} \label{res1}
\sum\limits_{\substack{\norm(d)\le (k!)^2NQ_0^{k\varepsilon-1}\\ d\equiv l\overline{r}_1 \bmod{q_1^k}}} 1 \ll
1+\frac{N}{Q_0^{k+1-k\varepsilon}}.
\end{equation}
  Note that we also have
\begin{equation} \label{lat1}
\sum\limits_{\substack{l\in \mathcal{O}_K\\ \norm(l/q_1^k) \le Q_0^{\varepsilon-1}}} 1 \le
\sum\limits_{\substack{l\in \mathcal{O}_K\\ \norm(l) \le Q_0^{\varepsilon+k-1}}} 1\ll Q_0^{k-1+2\varepsilon}.
\end{equation}
Combining \eqref{super1}, \eqref{res1} and \eqref{lat1}, we see that
\begin{equation} \label{comb3}
\sum\limits_{\substack{j\in \mathcal{O}_K \setminus \{0\}\\
\norm(j)\le NQ_0^{\varepsilon-k}}}
\left| S_k\left(q_1,r_1,j\right)\right|^{\kappa}
\ll (Q_0N)^{(2k+3)\varepsilon}\left(Q_0^{\kappa}+ NQ_0^{\kappa-k-1}\right).
\end{equation}

  It then follows from \eqref{generalKbound}, \eqref{countbeg} and \eqref{comb3} that we have
\begin{equation} \label{comb4}
 E \ll \frac{Q_0^{k+1}}{N}+(Q_0N)^{\varepsilon}\left(\frac{Q_0^{1+k/\kappa}}{N^{1/\kappa}}+Q_0^{1-1/\kappa}\right).
\end{equation}

The assertion of Theorem \ref{mainthm} now follows readily from \eqref{Tbound} and \eqref{comb4} by dividing the moduli into dyadic intervals and replacing $Q_0$ by $Q$.

\section{Proofs of Theorems \ref{secdthm}--\ref{primemoduli}}
\label{sec 4}

   We define $U$ as the way we define $T$ in \eqref{Tdef}, except that we remove the condition that $Q/2<\norm(q)\le Q $. We aim to estimate $U$ by first rewriting it as we did in \eqref{T} (with $n=s+t\omega_K$)
\begin{equation*}
U=\sum\limits_{q\in \mathcal{S}} \sum\limits_{\substack{r \bmod{q}\\ (r,q)=1}} \left|\sum\limits_{\substack{n\in \mathcal{O}_K\\ \norm(n)\le N}}
a_n \cdot e\left(\left(\frac{r/q-\overline{r/q}}{\sqrt{D_K}},\frac{r/q\omega_K-\overline{r/q}\overline{\omega}_K}{\sqrt{D_K}}\right)\cdot (s,t)\right)\right|^2 .
\end{equation*}

   To bound $U$, we employ Lemma \ref{lsR2} to see that
\begin{align*}
U\ll K(\Delta)(N+\Delta^{-1})\sum\limits_{\substack{n\in \mathcal{O}_K\\ \norm(n)\le N}} |a_n|^2,
\end{align*}
where
\begin{align*}
 K(\Delta) = & \sup\limits_{\alpha\in\mathbb{R}^2}
 \Bigg|\Bigg\{  (r,q)\in \mathcal{O}_K\times \mathcal{S} : (r,q)=1, \min\limits_{z\in \mathbb{Z}^2}
 \left\| \left(\frac{r/q-\overline{r/q}}{\sqrt{D_K}},\frac{r/q\omega_K-\overline{r/q}\overline{\omega}_K}{\sqrt{D_K}}\right)-\alpha -z\right\|_2\le \Delta^{1/2}\Bigg\} \Bigg| \\
   = & \sup\limits_{\alpha\in\mathbb{R}^2}
 \Bigg|\Bigg\{  (r,q)\in \mathcal{O}_K\times \mathcal{S} : (r,q)=1,
 \left\| \left(\frac{r/q-\overline{r/q}}{\sqrt{D_K}},\frac{r/q\omega_K-\overline{r/q}\overline{\omega}_K}{\sqrt{D_K}}\right)-\alpha \right\|_2\le \Delta^{1/2}\Bigg\} \Bigg|.
\end{align*}

   If we write $r/q=q_1+q_2\omega_K$, $\alpha=(\alpha_1, \alpha_2)$, then it follows from \eqref{realandimagpart} that
\begin{align*}
& \left\|\left(\frac{r/q-\overline{r/q}}{\sqrt{D_K}},\frac{r/q\omega_K-\overline{r/q}\overline{\omega}_K}{\sqrt{D_K}}\right)-\alpha \right\|_2
=\sqrt{(q_2-\alpha_1)^2+(q_1+q_2(\omega_K+\overline{\omega}_K)-\alpha_2)^2}.
\end{align*}

   It is easy to see that when $\omega_K=\sqrt{d}$, we have
\begin{align}
\label{P1}
 \sqrt{(q_2-\alpha_1)^2+(q_1+q_2(\omega_K+\overline{\omega}_K)-\alpha_2)^2}
\geq & \frac 1{\sqrt{-d}}\left|\frac{r}{q}-\alpha' \right|, \; \mbox{where} \;   \alpha' =\alpha_2+\alpha_1\sqrt{-d}i.
\end{align}

   When $\omega_K=\left( 1+\sqrt{d} \right)/2$, we have
\begin{align*}
& \left( q_2-\alpha_1 \right)^2+ \left(q_1+q_2(\omega_K+\overline{\omega}_K)-\alpha_2 \right)^2 = (q_1+q_2-\alpha_2)^2+(q_2-\alpha_1)^2.
\end{align*}

    Applying the inequality $2(a^2+b^2) \geq (a+b)^2$ for any real numbers $a,b$, we see that
\begin{align*}
 2\left ( \left(q_1+q_2-\alpha_2 \right )^2+\left (\frac {q_2-\alpha_1}{2} \right )^2 \right ) \geq \left(q_1+q_2-\alpha_2-\frac {q_2-\alpha_1}{2}\right)^2=\left(q_1+\frac {q_2}{2}-\alpha_2+\frac {\alpha_1}{2} \right )^2.
\end{align*}

   Note that we also have
\begin{align*}
 \left( \frac {q_2\sqrt{-d}}{2}-\frac {\alpha_1\sqrt{-d}}{2} \right)^2 =\frac {-d}{4} \left( q_2-\alpha_1 \right)^2.
\end{align*}

   We then deduce that (note that we have $-d \geq 2$ in our case)
\begin{align*}
  \left( \frac {q_2\sqrt{-d}}{2}-\frac {\alpha_1\sqrt{-d}}{2} \right)^2 + \left(q_1+\frac {q_2}{2}-\alpha_2+\frac {\alpha_1}{2} \right )^2= & \frac {-d}{4} \left( q_2-\alpha_1 \right)^2+\left(q_1+\frac {q_2}{2}-\alpha_2+\frac {\alpha_1}{2} \right )^2  \\
\leq & (-d) \left ( \left (q_1+q_2-\alpha_2 \right )^2+\left (q_2-\alpha_1 \right )^2 \right ).
\end{align*}

  It follows that when $\omega_K=(1+\sqrt{d})/2$, we have
\begin{align}
\label{P2}
 \sqrt{(q_2-\alpha_1)^2+(q_1+q_2(\omega_K+\overline{\omega}_K)-\alpha_2)^2}
\geq & \frac 1{\sqrt{-d}}\left|\frac{r}{q}-\alpha'' \right|, \; \mbox{where} \;   \alpha'' =\alpha_2-\frac {\alpha_1}{2}+\frac {\alpha_1\sqrt{-d}}{2}i.
\end{align}

   We then conclude from \eqref{P1} and \eqref{P2} that
\[ K(\Delta) \leq  \sup\limits_{\alpha\in\mathbb{C}} P(\alpha), \]
where
\begin{align*}
\begin{split}
P(\alpha) =  \left|\left\{(a,q) \in \mathcal{O}_K\times \mathcal{S} :  (a,q) = 1,\
\left| \frac{a}{q}-\alpha\right|\le \sqrt{-d}\Delta^{1/2}\right\}\right|.
\end{split}
\end{align*}

    Replacing $\Delta$ by $\Delta/(-d)$, we may assume $d=-1$ in the definition of $P(\alpha)$. Thus we have
\begin{equation*}
P(\alpha) =  \sum\limits_{\substack{q\in \mathcal{S}, (a,q) = 1\\ a/q \in B(\alpha,\Delta^{1/2})}} 1.
  \end{equation*}

To estimate $P(\alpha)$, we approximate $\alpha$ by a suitable element of $\mathcal{O}_K$. Let
\begin{equation*}
            \tau = \frac{1}{\Delta^{1/4}}.
\end{equation*}
Then, using Lemma \ref{LemmaDA}$, \alpha$ can be written in the form
\begin{equation} \label{approxi}
\alpha = \frac{b}{r} + z, \ \text{where} \  b,r\in \mathcal{O}_K,\ (b,r) = 1,\ |z|<\frac{\sqrt{|D_K|}}{|r|\tau},\ 0<\ |r|\le\tau.
\end{equation}
Thus, it suffices to estimate $P(b/r+z)$ for all $b$, $r$, $z$ satisfying \eqref{approxi}.
We further note that, as in the case of \cite[(24)]{Baier&Bansa2}, we may assume that
\begin{equation*}
|z|\geq \Delta^{1/2}.
\end{equation*}

 We then deduce that
\begin{align*}
K(\Delta) \ll \sup\limits_{\substack{r\in\mathcal{O}_K\\ 1\le |r|\le \tau}}\sup\limits_{\substack{b\in \mathcal{O}_K \\ (b,r) =1}}\sup\limits_{\substack{z\in\mathbb{C}\\
\Delta^{1/2}\le |z|\le \frac{\sqrt{|D_K|}}{|r|\tau}}}P\left(\frac{b}{r}+z\right).
\end{align*}

One then uses arguments similar to those in the proofs of Theorems 1--4 in \cite{Baier&Bansa2} to complete the proofs of Theorems \ref{secdthm}--\ref{primemoduli}.

\vspace{0.1in}

\noindent{\bf Acknowledgments.} P. G. is supported in part by NSFC grant 11871082 and L. Z. by the FRG grant PS43707 and the Faculty Goldstar Award PS53450. Parts of this work were done when P. G. visited the University of New South Wales (UNSW). He wishes to thank UNSW for the invitation, financial support and warm hospitality during his pleasant stay.  Finally, the authors would like to thank the anonymous referee for his/her careful reading of the paper.

\bibliography{biblio}
\bibliographystyle{amsxport}

\vspace*{.5cm}

\noindent\begin{tabular}{p{8cm}p{8cm}}
School of Mathematical Sciences & School of Mathematics and Statistics \\
Beihang University & University of New South Wales \\
Beijing 100191 China & Sydney NSW 2052 Australia \\
Email: {\tt penggao@buaa.edu.cn} & Email: {\tt l.zhao@unsw.edu.au} \\
\end{tabular}

\end{document}